\def\qed{\hfill$\Box$}
\begin{document}
 \newcommand*{\al}{\alpha}
 \newcommand*{\ba}{\beta}
 \newcommand*{\Da}{\Delta}
 \newcommand*{\da}{\delta}
 \newcommand*{\Ga}{\Gamma}
 \newcommand*{\ga}{\gamma}
 \newcommand*{\ka}{\kappa}
 \newcommand*{\wk}{{\wt{\ka}}}
 \newcommand*{\kwk}{{\ka\wt{\ka}}}
 \newcommand*{\lda}{\lambda}
 \newcommand*{\na}{\nabla}
 \newcommand*{\Om}{\Omega}
 \newcommand*{\oO}{\ol{\Omega}} 
 \newcommand*{\pO}{{\pl\Omega}}
 \newcommand*{\Sa}{\Sigma}
 \newcommand*{\sa}{\sigma}
 \newcommand*{\za}{\zeta}
 \newcommand*{\ve}{\varepsilon}
 \newcommand*{\vp}{\varphi}
 \newcommand*{\AB}{(\cA,\cB)}
 \newcommand*{\EeEn}{(E_1,E_0)}
 \newcommand*{\EnEe}{(E_0,E_1)}
 \newcommand*{\EnEn}{(E_0,E_0)}
 \newcommand*{\JEe}{(J,E_1)}
 \newcommand*{\JEn}{(J,E_0)}
 \newcommand*{\Mg}{(M,g)}
 \newcommand*{\Migi}{(M_i,g_i)}
 \newcommand*{\MgK}{(M,\gK)}
 \newcommand*{\MRm}{(M,\BR^m)}
 \newcommand*{\SRn}{(S,\BR^n)}
 \newcommand*{\Vg}{(V,g)}
 \newcommand*{\BUC}{BU\kern-.3ex C}
 \newcommand*{\BH}{{\mathbb H}}
 \newcommand*{\BN}{{\mathbb N}}
 \newcommand*{\BR}{{\mathbb R}}
 \newcommand*{\BZ}{{\mathbb Z}}
 \newcommand*{\cA}{{\mathcal A}}
 \newcommand*{\cB}{{\mathcal B}}
 \newcommand*{\cD}{{\mathcal D}}
 \newcommand*{\cL}{{\mathcal L}}
 \newcommand*{\gK}{{\mathfrak K}}
 \newcommand*{\bal}{\begin{aligned}}
 \newcommand*{\eal}{\end{aligned}}
 \newcommand*{\qa}{,\qquad}
 \newcommand*{\qb}{,\quad}
 \newcommand*{\ci}{\mathaccent"7017 }   
 \newcommand*{\hb}[1]{\hbox{$#1$}} 
 \newcommand*{\sdot}{\!\cdot\!}
 \newcommand*{\sn}{\kern1pt|\kern1pt}
 \newcommand*{\bsn}{\kern1pt\big|\kern1pt}
 \newcommand*{\ssm}{\!\setminus\!}
 \newcommand*{\vsdot}{\hbox{$\vert\sdot\vert$}}
 \newcommand*{\Vsdot}{\hbox{$\Vert\sdot\Vert$}}
 \newcommand*{\bssm}{\,\big\backslash\,}
 \newcommand*{\npbd}{\postdisplaypenalty=10000} 
 \newcommand*{\prsn}{\hbox{$(\cdot\sn\cdot)$}}
 \newcommand*{\pw}{\hbox{$\dl{}\sdot{},{}\sdot{}\dr$}}
 \newcommand*{\ol}{\overline}
 \newcommand*{\ul}{\underline}
 \newcommand*{\wh}{\widehat}
 \newcommand*{\wt}{\widetilde}
 \newcommand*{\ph}{\phantom}
 \newcommand*{\hr}{\hookrightarrow}
 \newcommand*{\ra}{\rightarrow}
 \newcommand*{\dl}{\langle}
 \newcommand*{\dr}{\rangle}
 \newcommand*{\sdh}{\stackrel{d}{\hookrightarrow}}
 \newcommand*{\dist}{{\rm dist}}
 \newcommand*{\id}{{\rm id}}
 \newcommand*{\vol}{{\rm vol}}
 \newcommand*{\Lis}{{\mathcal L}{\rm is}}
 \newcommand*{\loc}{{\rm loc}}
 \newcommand*{\is}{\subset}
 \newcommand*{\bt}{\bullet}
 \newcommand*{\es}{\emptyset}
 \newcommand*{\iy}{\infty}
 \newcommand*{\mt}{\mapsto}
 \newcommand*{\nag}{\na_{\cona g}}
 \newcommand*{\pl}{\partial}
 \newcommand*{\cona}{\kern-1pt}
 \newcommand*{\coU}{\kern-1pt}
 \newcommand*{\coW}{\kern-1pt}
 \makeatletter
 \newif\ifinany@
 \newcount\column@
 \def\column@plus{%
    \global\advance\column@\@ne
 }
 \newcount\maxfields@
 \def\add@amps#1{%
    \begingroup
        \count@#1
        \DN@{}%
        \loop
            \ifnum\count@>\column@
                \edef\next@{&\next@}%
                \advance\count@\m@ne
        \repeat
    \@xp\endgroup
    \next@
 }
 \def\Let@{\let\\\math@cr}
 \def\restore@math@cr{\def\math@cr@@@{\cr}}
 \restore@math@cr
 \def\default@tag{\let\tag\dft@tag}
 \default@tag

 \newbox\strutbox@
 \def\strut@{\copy\strutbox@}
 \addto@hook\every@math@size{%
  \global\setbox\strutbox@\hbox{\lower.5\normallineskiplimit
         \vbox{\kern-\normallineskiplimit\copy\strutbox}}}

\renewcommand{\start@aligned}[2]{%
    \RIfM@\else
        \nonmatherr@{\begin{\@currenvir}}%
    \fi
    \null\,%
    \if #1t\vtop \else \if#1b \vbox \else \vcenter \fi \fi \bgroup
        \maxfields@#2\relax
        \ifnum\maxfields@>\m@ne
            \multiply\maxfields@\tw@
            \let\math@cr@@@\math@cr@@@alignedat
        \else
            \restore@math@cr
        \fi
        \Let@
        \default@tag
        \ifinany@\else\openup\jot\fi
        \column@\z@
        \ialign\bgroup
           &\column@plus
            \hfil
            \strut@
            $\m@th\displaystyle{##}$%
           &\column@plus
            $\m@th\displaystyle{{}##}$%
            \hfil
            \crcr
 }
\renewenvironment{aligned}[1][c]{%
    \start@aligned{#1}\m@ne
 }{%
    \crcr\egroup\egroup
 }
 \makeatother
 \makeatletter
 \@addtoreset{equation}{section}
 \makeatother
\renewcommand{\theequation}{\arabic{section}.\arabic{equation}}        
\newtheorem{theorem}{Theorem}{\it}{\rm}
 \makeatletter
 \@addtoreset{theorem}{section}
 \makeatother
\renewcommand{\thetheorem}{\arabic{section}.\arabic{theorem}}        
 \makeatletter
 \@addtoreset{example}{section}
 \makeatother
\newtheorem{remark}{Remark}{\rm}{\rm}
 \makeatletter
 \@addtoreset{remark}{section}
 \makeatother
\renewcommand{\theremark}{{\arabic{section}.\arabic{remark}}}        
\newtheorem{remarks}{Remarks}{\rm}{\rm}
 \makeatletter
\renewcommand{\theremarks}{\arabic{section}.\arabic{remarks}}        
 \@addtoreset{remarks}{section}
 \makeatother
\newtheorem{lemma}{Lemma}{\it}{\rm}
 \makeatletter
 \@addtoreset{lemma}{section}
 \makeatother
\renewcommand{\thelemma}{\arabic{section}.\arabic{lemma}}        
\newtheorem{corollary}{Corollary}{\it}{\rm}
 \makeatletter
 \@addtoreset{corollary}{section}
 \makeatother
\renewcommand{\thecorollary}{\arabic{section}.\arabic{corollary}}        
\newtheorem{example}{Example}{\rm}{\rm}
\renewcommand{\theexample}{\arabic{section}.\arabic{example}}        
\newtheorem{examples}{Examples}{\rm}{\rm}
 \makeatletter
\renewcommand{\theexamples}{\arabic{section}.\arabic{examples}}        
 \@addtoreset{examples}{section}
 \makeatother
\newtheorem*{ExtraProof}{Proof of Theorem~1.3}{\bf}{\rm} 
\title{Linear Parabolic Equations with Strong Boundary Degeneration}
\author{Herbert Amann} 
\date{}
\maketitle
\begin{center} 
{\small 
Dedicated to Michel Chipot\\ 
in Appreciation of our Joint Professional Time} 
\end{center} 
\begin{abstract} 
As an application of the theory of linear parabolic differential equations 
on noncompact Riemannian manifolds, developed in earlier papers, we prove a 
maximal regularity  theorem for nonuniformly parabolic boundary value 
problems in Euclidean spaces. The new feature of our result is the fact 
that---besides of obtaining an optimal solution theory---we consider the 
`natural' case where the degeneration occurs only in the normal direction.  
\end{abstract}
\makeatletter
\def\blfootnote{\xdef\@thefnmark{}\@footnotetext}
\makeatother 
\blfootnote{
2010 Mathematics Subject Classification. 35K65 35K45  53C44\\ 
Key words and phrases: 
Degenerate parabolic boundary value problems, 
Riemannian manifolds with bounded geometry.} 
\setcounter{section}{0} 
\section{Introduction}\label{sec-I} 
Of concern in this paper
are linear second order parabolic differential equations which are not 
uniformly parabolic but degenerate near (some part of) the boundary. 
In the main body of this work such equations are studied in the 
framework of Riemannian manifolds. Here we restrict ourselves to a simpler 
Euclidean setting. 

\par 
We assume that $\Om$~is a bounded domain in~$\BR^m$, 
\ \hb{m\geq1}, with a smooth boundary~$\pO$ which lies locally on one side 
of~$\Om$. We write 
\begin{equation}\label{I.dO} 
\pO=\Ga\cup\Ga_0\cup\Ga_1, 
\end{equation}  
where $\Ga$, $\Ga_0$, and~$\Ga_1$ are pairwise disjoint and open and closed 
in~$\pO$ with 
\hb{\Ga\neq\es}. Either $\Ga_0$ or~$\Ga_1$, or both, may be empty in which 
case obvious adaptions apply (as is the case if 
\hb{m=1}). We denote by~$\nu$ the inner (unit) normal on~$\pO$ and 
by~$\ga$ the trace operator 
\hb{u\mt u\sn\pO}. By~%
\hb{\cdot} or~%
\hb{\prsn} we denominate the Euclidean inner product in~$\BR^m$ and 
\hb{:}~stands for the Hilbert--Schmidt inner product 
in~$\BR^{m\times m}$. Moreover, $\na u$~is the \hbox{$m$-vector} of first 
order derivatives, and $\na^2u$~is the 
\hb{(m\times m)}-matrix of second order derivatives. As usual, $C^k$~is used 
for spaces of \hbox{$C^k$~functions}, $B$~stands for `bounded', and 
$\BUC$~for `bounded and uniformly continuous'.  

\bigskip 
We set 
$$ M:=\oO\ssm\Ga 
$$ 
and consider on~$M$ a second order linear boundary value (BVP), 
denoted by $\AB$, where 
$$ 
\cA u 
:=-a:\na^2u+a_1\cdot\na u+a_0u\quad\text{on }M 
$$ 
and 
$$ 
\cB u
:= 
\left\{ 
\bal 
{}
&\ga u                      &&\quad\text{on }\Ga_0,\cr 
&b\cdot\ga\na u+b_0\ga u    &&\quad\text{on }\Ga_1
\eal 
\right. 
$$ 
on 
\hb{\pl M:=\Ga_0\cup\Ga_1}. It is assumed that 
\begin{equation}\label{I.aC} 
a=a^*\in C(M,\BR^{m\times m}) 
\qb a_1\in C\MRm 
\qb a_0\in C(M),
\end{equation}  
and 
$$
b\in BC^1(\Ga_1,\BR^m) 
\qb b_0\in BC^1(\Ga_1).  
$$ 
We also suppose that $\cA$~is strongly elliptic, that is, there exists 
\hb{\ul{\al}\colon M\ra(0,1]} such that 
$$ 
\bigl(a(x)\xi\bsn\xi\bigr)\geq\ul{\al}(x)\,|\xi|^2
\qa x\in M, 
$$ 
and that $\cB$~is normal, which means 
\begin{equation}\label{I.bn} 
\big|\bigl(b(x)\bsn\nu(x)\bigr)\big|>0 
\qa x\in\Ga_1. 
\end{equation}  
Note that $\cB$~is the Dirichlet boundary operator on~$\Ga_0$ and a 
first order boundary operator on~$\Ga_1$. 

\bigskip 
We fix 
\hb{T\in(0,\iy)} and set 
\hb{J:=[0,T]}. In this paper we develop an $L_p$~So\-bo\-lev 
space theory for the parabolic BVP on 
\hb{M\times J}:
\begin{equation}\label{I.P} 
\bal 
\pl_tu+\cA u &=f    &&\quad\text{on }    &M      &\times J, \cr
       \cB u &=0    &&\quad\text{on }    &\pl M  &\times J, \cr
      \ga_0u &=u_0  &&\quad\text{on }    &M      &\times\{0\}, 
\eal 
\end{equation}  
where $\ga_0$~is the trace operator at 
\hb{t=0}. Observe that \eqref{I.P} is \emph{not} a~BVP on~$\oO$, since there 
is no boundary condition on~$\Ga$. Also note that $\cA$~is 
\emph{not} assumed to be uniformly elliptic. 

\par 
In general, \eqref{I.P} will not be well-posed. We now introduce conditions 
for the behavior of $a$ and~$a_1$ near~$\Ga$ which guarantee an optimal 
solvability theory. This is done by prescribing---by means of a singularity 
function---the way by which $a$ and~$a_1$ vanish as we approach~$\Ga$. 

\par 
We call a function 
$$ 
R\in C^\iy\bigl((0,1],(0,\iy)\bigr) 
\quad\text{with}\quad 
\int_0^1\frac{dy}{R(y)}=\iy 
\npbd 
$$ 
(\emph{strong}) \emph{singularity function}. 
\begin{example}\label{exa-I.S1} 
{\rm 
Suppose 
\hb{s\in\BR}. Then the power function 
\hb{R_s:=(y\mt y^s)} is a strong singularity function iff 
\hb{s\geq1}. Also 
\hb{y\mt e^{-\ba y^{-\ga}}} is a strong singularity function if 
\hb{\ba,\ga>0}.}\qed 
\end{example} 

\bigskip 
To specify the behavior of the coefficients of~$\cA$ near~$\Ga$ 
we choose a normal collar for it. This means that we fix 
\hb{0<\ve\leq1} such that, setting 
$$ 
S:=\bigl\{\,q+y\nu(q)\ ;\ 0<y\leq\ve,\ q\in\Ga\,\bigr\}, 
$$ 
the map 
\begin{equation}\label{I.col} 
\vp\colon\ol{S}\ra[0,\ve]\times\Ga 
\qb q+y\nu(q)\mt(y,q) 
\end{equation}  
is a smooth diffeomorphism. Hence 
$$ 
y=\dist(x,\Ga) 
\quad\text{for}\quad 
x=q+y\nu(q)\in S. 
$$ 
We select 
\hb{\rho\in C^\iy\bigl(M,(0,1]\bigr)} satisfying 
\hb{\rho(x)=\dist(x,\Ga)} for 
\hb{x\in S} and set 
$$ 
r(x):=R\bigl(\rho(x)\bigr) 
\qa x\in M. 
$$ 
We also define 
\hb{\nu\in C^\iy\SRn} by extending the normal vector field from~$\Ga$ to~$S$ 
by setting 
$$ 
\nu(x):=\nu(q) 
\qa x=q+y\nu(q)\in S. 
$$ 

\par 
The operator~$\cA$ is said to be \hbox{$R$\emph{-degenerate}} 
\emph{uniformly strongly elliptic on}~$M$ if 
\begin{equation}\label{I.dA} 
\bal 
{\rm(i)}\quad 
&\cA\text{ is strongly elliptic on }M;\cr 
{\rm(ii)}\quad 
&\text{there exists $\ul{\al}\in(0,1)$ such that}\cr 
&\bigl(a(x)\xi\bsn\xi\bigr) 
 \geq\ul{\al}\bigl(r^2(x)\eta^2+|\za|^2\bigr)\cr  
&\text{for all $x\in S$ and $\xi=\eta\nu(x)+\za\in\BR^m$ with } 
 \bigl(\za\bsn\nu(x)\bigr)=0.    
\eal 
\end{equation}  
The boundary value problem~$\AB$ is called 
\hbox{$R$\emph{-degenerate}} \emph{uniformly strongly elliptic on}~$M$ if 
$\cA$~has this property and $\cB$~is normal. It is 
\emph{strongly degenerate near}~$\Ga$ if \eqref{I.dA} holds for some 
singularity function~$R$. 

\bigskip 
Let 
\hb{\lda\colon V_\lda\ra\BR^{m-1}}, 
\ \hb{q\mt z=(z^2,\ldots,z^m)} be a local coordinate system for~$\Ga$. Set 
\hb{U_{\coU\ka}:=\vp^{-1}\bigl([0,\ve)\times V_\lda\bigr)\is\oO}. Then 
\begin{equation}\label{I.k} 
\ka:=(\id_{[0,\ve)}\times\lda)\circ\vp 
\colon U_{\coU\ka}\ra\BH^m:=\BR_+\times\BR^{m-1}
\end{equation}  
is a local boundary flattening chart for~$\oO$. It follows from 
Section~\ref{sec-E} that~$\cA_\ka$, the local representation of 
\hb{\cA\sn U_{\coU\ka}} in the coordinate system 
\hb{\ka=(y,z)}, is given by  
\begin{equation}\label{I.Ak} 
\bal 
\cA_\ka 
&=-\bigl(\ol{a}_\ka^{11}(R\pl_y)^2+2\ol{a}_\ka^{1\al}(R\pl_y)\pl_{z^\al} 
 +\ol{a}_\ka^{\al\ba}\pl_{z^\al}\pl_{z^\ba}\bigr)\cr 
&\ph{={}} 
 +\ol{a}_\ka^1(R\pl_y)+\ol{a}_\ka^\al\pl_{z^\al}+\ol{a}_\ka^0, 
\eal 
\end{equation}  
where we use the summation convention with $\al$ and~$\ba$ running 
from~$2$ to~$m$. The operator~$\cA_\ka$ on 
\hb{\ka(U_{\coU\ka})\is\BH^m} is \hbox{$bc$\emph{-regular}} if 
\begin{equation}\label{I.aa} 
\ol{a}_\ka^{ij}\in\BUC\bigl(\ka(U_{\coU\ka})\bigr) 
\qb \ol{a}_\ka^k,\ol{a}_\ka^0\in BC\bigl(\ka(U_{\coU\ka})\bigr) 
\qa 1\leq i,j,k\leq m. 
\end{equation}  
We call~$\cA$\, \hbox{$R$\emph{-degenerate}} \hbox{$bc$-\emph{regular}} if  
$$ 
\bal 
{\rm(i)}\quad 
&\eqref{I.aC}\text{ applies};\cr 
{\rm(ii)}\quad 
&\cA_\ka\text{ is $bc$-regular for each boundary flattening chart 
 of the form \eqref{I.k}}. 
\eal 
$$ 
\addtocounter{remark}{1}
\begin{remark}\label{rem-I.bc} 
{\rm 
The ellipticity condition~\eqref{I.dA}(ii) is equivalent to the 
statement:\newline  
for each~$\ka$ of the from~\eqref{I.k}, the 
matrix~$\big[\ol{a}_\ka^{ij}\big]$ 
is symmetric and uniformly positive definite on~$\ka(U_{\coU\ka})$.}\qed 
\end{remark} 

\bigskip 
Next we introduce weighted Sobolev spaces which are adapted to strongly 
degenerate differential operators. We assume throughout 
$$ 
\bt\quad 
1<p<\iy. 
$$ 
The representation of 
\hb{u\colon S\ra\BR} in the variables~$(y,q)$ is denoted by~$\vp_*u$, 
that is, 
\hb{\vp_*u=u\circ\vp^{-1}}. Given 
\hb{k=0,1,2} and 
\hb{u\in C^2(S)}, 
$$ 
\|u\|_{W_{\coW p}^k(S;R)} 
:=\sum_{i=0}^k\Bigl(\int_0^\ve\bigl\|\bigl(R(y)\pl_y\bigr)^i 
\vp_*u(y,\cdot)\bigr\|_{W_{\coW p}^{k-i}(\Ga)}^p 
\,\frac{dy}{R(y)}\Bigr)^{1/p}. 
$$ 
The Sobolev space~$W_{\coW p}^k(S;R)$ is defined to be the completion 
in~$L_{1,\loc}(S)$ of the set of smooth compactly supported functions 
with respect to this norm. 

\par 
We choose a relatively compact open subset~$U$ of~$M$ such that 
\hb{S\cup U=M}. Then the \emph{Sobolev space}~$W_{\coW p}^k(M;R)$ consists 
of all 
\hb{u\in L_{1,\loc}(M)} for which 
$$ 
u\sn S\in W_{\coW p}^k(S;R) 
\qb u\sn U\in W_{\coW p}^k(U).
$$ 
It is a Banach space with the norm 
$$ 
u\mt\big\|u\sn S\big\|_{W_{\coW p}^k(S;R)} 
+\big\|u\sn U\big\|_{W_{\coW p}^k(U)},  
\npbd 
$$ 
whose topology is independent of the particular choice of $S$ and~$U$. 

\bigskip 
For a concise formulation of our solvability result for problem~\eqref{I.P} 
we recall some notation. Given Banach spaces $E_0$ and~$E_1$, 
\ $\cL\EeEn$~is the Banach space of bounded linear operators from~$E_1$ 
into~$E_0$, and $\Lis\EeEn$ is the set of isomorphisms therein. As usual, 
\hb{E_1\hr E_0} means that $E_1$~is continuously injected in~$E_0$, and 
\hb{E_1\sdh E_0} indicates that $E_1$~is also dense in~$E_0$. We write 
$E_{1-1/p}$ for the real interpolation space~$\EnEe_{1-1/p,p}$. 

\par 
Suppose 
\hb{E_1\sdh E_0} and 
\hb{A\in\cL\EeEn}. Then $A$~is said to have \emph{maximal 
\hbox{$L_p$~regularity}} if, for each 
\hb{(f,u_0)\in L_p\JEn\times E_{1-1/p}}, the linear evolution equation 
in~$E_0$, 
$$ 
\pl u+Au=f\text{ on }J 
\qb \ga_0u=u_0, 
$$ 
has a~unique solution 
\hb{u\in L_p\JEe\cap W_{\coW p}^1\JEn} depending continuously on $(f,u_0)$. 
By Banach's homomorphism theorem this is equivalent to 
$$ 
(\pl+A,\ \ga_0) 
\in\Lis\bigl(L_p\JEe\cap W_{\coW p}^1\JEn, 
\ L_p\JEn\times E_{1-1/p}\bigr). 
\npbd 
$$ 
This concept is independent of~$T$. 

\par 
Henceforth, we express maximal \hbox{$L_p$~regularity} more precisely 
by saying 
$$ 
\bigl(L_p\JEe\cap W_{\coW p}^1\JEn,\ L_p\JEn\bigr) 
$$ 
is~a \emph{pair of maximal regularity for}~$A$. It is known that this 
condition implies that~$-A$, considered as a linear operator in~$E_0$ with 
domain~$E_1$, generates a strongly continuous analytic semigroup on~$E_0$, 
that is, in 
\hb{\cL(E_0)=\cL\EnEn}. 
For all this we refer to Chapter~III in~\cite{Ama95a}. 

\bigskip 
We suppose:  
\begin{equation}\label{I.ass} 
\bal 
{\rm(i)}\quad 
&R\text{ is  a strong singularity function}.\cr 
{\rm(ii)}\quad 
&\AB\text{ is an $R$-degenerate}\cr 
\noalign{\vskip-1\jot} 
&\text{uniformly strongly elliptic  BVP on }M.\cr   
{\rm(iii)}\quad 
&\cA\text{ is $R$-degenerate $bc$-regular}. 
\eal 
\end{equation} 
Then 
$$ 
W_{\coW p,\cB}^2(M;R) 
:=\bigl\{\,u\in W_{\coW p}^2(M;R)\ ;\ \cB u=0\,\bigr\} 
$$  
is a closed linear subspace of~$W_{\coW p}^2(M;R)$, 
$$ 
W_{\coW p,\cB}^2(M;R)\sdh L_p(M;R) 
:=W_{\coW p}^0(M;R), 
$$ 
and 
$$ 
A:=\cA\sn W_{\coW p,\cB}^2(M;R) 
\in\cL\bigl(W_{\coW p,\cB}^2(M;R),L_p(M;R)\bigr). 
$$ 
Hence the parabolic BVP~\eqref{I.P} can be interpreted, using standard 
identifications, as the linear evolution equation in~$L_p(M;R)$: 
$$ 
\pl u+Au=f\text{ on }J 
\qb \ga_0u=u_0. 
$$ 
Now we can formulate our well-posedness result for~\eqref{I.P}. 
\addtocounter{theorem}{2} 
\begin{theorem}\label{thm-I.MR} 
Let \eqref{I.ass} be satisfied. Then 
$$ 
\bigl(L_p(J,W_{\coW p}^2(M;R))\cap W_{\coW p}^1(J,L_p(M;R)), 
\ L_p(J,L_p(M;R))\bigr) 
\npbd 
$$ 
is a pair of maximal regularity for~$A$. 
\end{theorem}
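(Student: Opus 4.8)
The strategy is to convert the degenerate Euclidean problem $\AB$ into a \emph{non}-degenerate, uniformly strongly elliptic boundary value problem on a complete Riemannian manifold with bounded geometry, and then to invoke the maximal regularity theorem for such problems established in the earlier papers. The device that undoes the degeneration is forced by the defining property $\int_0^1 dy/R(y)=\iy$ of a strong singularity function. On the collar we introduce the rescaled normal coordinate
$$
\tau(y):=\int_y^\ve\frac{ds}{R(s)}\qa 0<y\leq\ve,
$$
which is a smooth diffeomorphism of $(0,\ve]$ onto $[0,\iy)$, pushes $\Ga$ (sitting at $y=0$) out to $\tau=\iy$, and satisfies $R\pl_y=-\pl_\tau$. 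We equip $M=\oO\ssm\Ga$ with a Riemannian metric $g$ that on the collar $S$ equals the product $d\tau^2+g_\Ga=R^{-2}\,dy^2+g_\Ga$, with $g_\Ga$ a fixed metric on the compact manifold $\Ga$, and that is uniformly equivalent to the Euclidean metric on a relatively compact complement. Then $\Mg$ is a complete manifold with compact boundary $\pl M=\Ga_0\cup\Ga_1$ and a single cylindrical end over $\Ga$; since that end is a product over the compact base $\Ga$, $\Mg$ has bounded geometry in the sense demanded by the general theory.

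First I would match the function spaces. Because $R\pl_y=\pm\pl_\tau$ and $dy/R(y)=d\tau$, the weighted norm $\|\sdot\|_{W_p^k(S;R)}$ is, term by term, the intrinsic Riemannian $W_p^k$-norm on the cylinder $[0,\iy)\times\Ga$ carrying $g$ and the product volume $d\tau\,d\vol_\Ga$; on the relatively compact complement the two notions already agree. For $k=0,1,2$ this yields $W_p^k(M;R)=W_p^k\Mg$ and $L_p(M;R)=L_p\Mg$ with equivalent norms, so the spaces occurring in the assertion are exactly the intrinsic parabolic spaces of $\Mg$.

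Next I would transport the operator and the boundary conditions. By \eqref{I.Ak} the principal part of $\cA$ is expressed in the frame $(R\pl_y,\pl_{z^2},\ldots,\pl_{z^m})$, which is precisely the frame adapted to $g$ on $S$, with coefficient matrix $[\ol a_\ka^{ij}]$. By Remark~\ref{rem-I.bc} the degeneracy condition \eqref{I.dA}(ii) is equivalent to the uniform positive definiteness of $[\ol a_\ka^{ij}]$, i.e.\ to uniform strong $g$-ellipticity of $\cA$; and the $R$-degenerate $bc$-regularity \eqref{I.aa} says exactly that the coefficients of $\cA$ lie in the $\BUC$/$BC$ classes attached to a uniformly regular atlas of $\Mg$. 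The boundary $\pl M=\Ga_0\cup\Ga_1$ is compact and lies at finite $g$-distance from the end, so no degeneration is felt there; $\cB$ is the Dirichlet operator on $\Ga_0$ and, by the normality \eqref{I.bn}, a normal first order operator with $BC^1$ coefficients on $\Ga_1$, hence satisfies the Lopatinskii--Shapiro condition. Thus $\AB$ verifies every hypothesis of the maximal regularity theorem for parabolic boundary value problems on manifolds with bounded geometry. Applying that theorem gives $W_{p,\cB}^2\Mg\sdh L_p\Mg$ and maximal $L_p$ regularity for the realization $A$; transporting the conclusion back through the identifications of the previous paragraph produces exactly the pair of maximal regularity asserted in the statement.

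The crux of the argument is the geometric--analytic verification underlying the second and third steps: that $\Mg$ really has \emph{uniformly} bounded geometry all the way out to the infinite end, and that the passage from the Euclidean collar data to the cylindrical frame turns hypotheses \eqref{I.dA} and \eqref{I.aa} into the $\BUC$-type conditions required by the abstract result---\emph{independently of how fast $R$ decays}, e.g.\ for $R(y)=e^{-\ba y^{-\ga}}$. Concretely one must exhibit a uniformly regular atlas in the $\tau$-variable, bound the curvature and bound the injectivity radius below uniformly, and confirm that the weight $dy/R(y)$ and the derivative $R\pl_y$ produce the intrinsic volume and the normal covariant derivative on the cylinder. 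Once this dictionary between the degenerate Euclidean formulation and the uniformly regular Riemannian formulation is established, maximal regularity follows by direct citation of the general theorem.
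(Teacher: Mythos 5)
Your proposal is correct and follows essentially the same route as the paper: the substitution $\tau=\int_y^\ve ds/R(s)$ with metric $d\tau^2\oplus g_\Ga=R^{-2}dy^2\oplus g_\Ga$ on the collar is exactly the paper's singular model $(\wh{M},\wh{g}_R)$ of Section~5 (Lemma~\ref{lem-B.R}, Theorem~\ref{thm-B.M}), your function-space matching is the renorming Theorem~\ref{thm-R.N}, your translation of \eqref{I.dA} and \eqref{I.aa} into uniform ellipticity and $bc$-regularity corresponds to Theorems \ref{thm-E.a} and~\ref{thm-E.bc}, and the final citation is Theorem~\ref{thm-P.MR}. The only cosmetic differences are that the paper glues the cylindrical metric to~$g$ via a cutoff $\chi$ and phrases bounded geometry as uniform regularity, obtaining the uniformly regular atlas in the $\tau$-variable for free by pulling back the canonical structure of $\BR_+$ (Corollary~\ref{cor-B.R}), which disposes of the verification you flag as the crux.
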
 
\begin{proof} 
See Section~\ref{sec-E}. 
\end{proof} 
\addtocounter{remarks}{3} 
\begin{remarks}\label{rem-I.R}  
{\rm 
(a) 
This theorem has an obvious generalization to situations in which 
$R$~varies from connected component to connected component of~$\Ga$. It 
also applies verbatim to strongly elliptic systems. 

\par 
(b) 
The weighted Sobolev space~$W_{\coW p}^2(M;R)$ satisfies 
embedding theorems analogous to the familiar ones for the unweighted 
spaces~$W_{\coW p}^2(M)$. This implies, in particular, that the solution~$u$ 
and its first derivatives are H\"older continuous if 
\hb{p>m}. We refrain from giving details, since we would need to introduce 
appropriately weighted H\"older spaces. 

\par 
It is also possible to establish a H\"older space analog of 
Theorem~\ref{thm-I.MR}, as well as optimal solvability results for 
nonautonomous problems in parabolic space-time settings of the type 
\hb{W_{\coW p}^{2,1}(M\times J;R)}. All this will be found in the 
forthcoming book~\cite{AmaVolIII21a}. 

\par 
(c) 
For simplicity, we have restricted ourselves to bounded domains. However, 
Theorem~\ref{thm-I.MR} remains valid if it is only assumed that 
$\pO$~is uniformly regular in the sense of F.E.~Browder~\cite{Bro59a} 
(also see \cite[IV.\S4]{LSU68a} and Section~\ref{sec-U} below).}\qed 
\end{remarks} 

\bigskip 
It is worthwhile to have a closer look at a simple model problem, taking the 
last remark into account. 
\addtocounter{example}{3}
\begin{example}\label{exa-I.ex} 
{\rm 
Let 
\hb{\oO:=[0,1]\times\BR^{m-1}}. Then $\pO$~is the union of 
\hb{\pl_0\Om\cup\pl_1\Om} with 
\hb{\pl_i\Om=\{i\}\times\BR^{m-1}}. Set 
\hb{\Ga:=\pl_0\Om} (identified with~$\BR^{m-1}$) and fix 
\hb{s\geq1}. On 
\hb{M:=(0,1]\times\BR^{m-1}} consider the Dirichlet BVP $(\cA_s,\ga)$ with 
\begin{equation}\label{I.Asl} 
\bal 
\cA_s 
&:=-\bigl(y^s\pl_y(y^s\pl_y)+\Da_{m-1}\bigr)\cr 
&\phantom{:}=-(y^{2s}\pl_y^2+\Da_{m-1}) 
 -(sy^{s-1})y^s\pl_y, 
\eal
\end{equation} 
where $\Da_{m-1}$~is the Laplace operator on~$\BR^{m-1}$. Since 
\hb{|sy^{s-1}|\leq s} on~$M$, it is obvious that $(\cA_s,\ga)$ is 
\hbox{$R_s$-degenerate} strongly uniformly elliptic on~$M$. Here we can take 
\hb{S=M}. Note that 
$$ 
L_p(M;R_s)=L_p(M,y^{-s}dy\,dz) 
\npbd 
$$ 
with 
\hb{z\in\BR^{m-1}}. 

\par 
The operator~$\cA_s$ can be rewritten as 
\begin{equation}\label{I.As} 
\cA_s=-(y^{2s}\Da_{g_s}+\Da_{m-1}), 
\end{equation}  
$\Da_{g_s}$~being the Laplace--Beltrami operator on~$(0,1]$ for 
the metric 
\hb{g_s=y^{-2s}\,dy^2} (see \eqref{R.na}).}\qed 
\end{example} 

\bigskip 
The interpretation~\eqref{I.As} is the first pivotal step on the way to an 
efficient and successful handling of strongly degenerate parabolic BVPs. 
The second step, which takes the theory off the ground, is the proof 
(in Section~\ref{sec-B}) that 
\hb{\bigl((0,1],g_s\bigr)} is a uniformly regular Riemannian manifold 
(in the sense of Section~\ref{sec-U}). 

\par 
Although there has been done much work on degenerate parabolic 
differential equations, there are only very few papers known to us 
dealing with strong boundary degenerations. We mention, in particular, 
A.V.~Fursikov~\cite{Fur69a}, V.~Vespri~\cite{Ves89b}, 
N.V.~Krylov~\cite{Kry99a}, N.V.~Krylov and S.V.~Lototsky~\cite{KryL99b}, 
S.V.~Lototsky~\cite{Lot00a}, \hbox{K.-H.}~Kim~\cite{Kim07a}, and 
S.~Fornaro, G.~Metafune, and D.~Pallara~\cite{FMP11a}. 
In all but \cite{Fur69a}, \cite{Kry99a}, and \cite{KryL99b}, 
uniform boundary degenerations of type~$R_s$, 
\ \hb{s\geq1}, are being considered. This means that 
the ellipticity condition 
\begin{equation}\label{I.ar} 
\bigl(a(x)\xi\bsn\xi\bigr) 
\geq\ul{\al}\rho^{2s}(x)\,|\xi|^2 
\qa x\in M, 
\end{equation}  
is imposed. Vespri, Fornaro et~al., and also Kim, consider the operator 
\begin{equation}\label{I.Asr} 
\wt{\cA}_su:=-\rho^{2s}a\colon\na^2u+\rho^sa_1\cdot\na u+a_0u 
\end{equation}  
in 
\hb{M=\Om}, which means that 
\hb{\Ga=\pO}, with smooth coefficients, and a uniformly positive definite 
diffusion matrix~$a$. They study~$\wt{A}_s$ on the weighted Sobolev space 
$$ 
\wt{W}_{\coW p}^2(\Om;\rho^s) 
:=
\bigl\{\,u\in L_p(\Om) 
\ ;\ \rho^s\pl_iu,\rho^{2s}\pl_j\pl_ku\in L_p(\Om), 
\ 1\leq i,j,k\leq m\,\bigr\}. 
$$ 
Fornaro et~al.\ give a new, functional analytically based proof 
for Vespri's result which says that $-\wt{A}_s$~generates a strongly 
continuous analytic semigroup on~$L_p(\Om)$. In a preparatory step they 
consider, in the setting of Example~{\ref{exa-I.ex}}, the operator 
\begin{equation}\label{I.Ds} 
-y^{2s}\Da_m+y^sa_1\cdot\na 
\end{equation}  
with a constant vector~$a_1$ and show that it has maximal 
\hbox{$L_p(M)$~regularity}. That proof uses the fact that second order 
equations are considered. It is not applicable to systems or higher order 
problems. There is no maximal regularity result for the general case. 
It should be mentioned that Vespri studies H\"older space settings 
also.

\par 
Kim~\cite{Kim07a} proves a maximal regularity theorem by employing weighted 
Bessel potential spaces, introduced originally by 
N.V.~Krylov~\cite{Kry99b}, \cite{Kry99a} in connection with stochastic 
evolution equations. Krylov considers the half-space~$\BH^m$ and 
\hb{s=1}, and uses basically the fact that a logarithmic change of variables 
reduces the weighted spaces to the standard Bessel potential spaces 
on~$\BR^m$. Kim's proof is in the spirit of the classical theory of partial 
differential equations. He employs a~priori estimates due to
Krylov~\cite{Kry99a} and versions of the Krylov spaces for bounded 
domains, established by  S.V.~Lototsky~\cite{Lot00a}. A~similar approach 
is used by the latter author for a related degenerate operator. However, 
Lototsky builds on techniques from stochastic differential equations. 

\par 
Parabolic equations with strong boundary degeneration occur, in particular, 
in connection with Ito stochastic parabolic equations (e.g., 
Lototsky~\cite{Lot01a}, Krylov and Lototsky \cite{KryL99b}, 
\cite{KryL99a}, \hbox{K.-H.}~Kim and N.V.~Krylov \cite{KimK04a}, 
\cite{KimK04b}, and the references therein). 

\par 
The obvious difference between \eqref{I.ar} (resp.~\eqref{I.Asr}) and 
\eqref{I.dA} is the fact that, in the former case, the diffusion and drift 
coefficients decay uniformly in all variables, whereas in \eqref{I.dA} only 
a degeneracy in the normal direction is taken into account. 
This sticks out particularly clearly by comparing \eqref{I.Asl} with 
\eqref{I.Ds}. Our approach seems to be 
more natural since, a~priori, there is no reason to expect that tangential 
derivatives blow up near~$\Ga$. (See \cite{KryL99b} for a similar 
remark.) 

\par 
The only results for parabolic equations with degeneracies in normal 
directions only are 
in \cite{Fur69a}, \cite{Kry99a}, and~\cite{KryL99b}. Fursikov 
establishes an \hbox{$L_2$~theory} for general parabolic systems of 
arbitrary order which are of the type of Euler's differential equation. 
This means that, in the model half-space case, 
$\pl_y$~carries the weight~$y$. He uses a logarithmic change of 
variables and builds on the work of M.S.~Agranovich and 
M.I.~Vishik~\cite{AgV64a}. Krylov~\cite{Kry99a}, 
resp.\ Krylov and Lototsky~\cite{KryL99b}, establish a maximal 
regularity theory in the case of the one-dimensional half-line, 
resp.~$\BH^m$, in the weighted Bessel potential spaces introduced 
in \cite{Kry99b}, resp.~\cite{Kry99a}. 
Our paper is the first one in which the case of a general 
domain, in fact, a~general Riemannian manifold, is being handled.  

\par
Section~\ref{sec-U} contains a brief review of the relevant facts on 
uniformly regular Riemannian manifolds. In Section~\ref{sec-F} we 
present the corresponding function space settings. In the subsequent 
section we recall the maximal regularity theorem for second order uniformly 
parabolic BVPs on uniformly regular Riemannian manifolds. 

\par 
In Section~\ref{sec-B} we introduce uniformly regular Riemannian manifolds 
with strong boundary singularities. Then, in Section~\ref{sec-R}, 
we prove a renorming 
theorem for Sobolev spaces on manifolds with strong boundary singularities. 
In the final section we investigate the concepts of uniform ellipticity and 
\hbox{$bc$-regularity} in the framework of strong boundary degeneracy 
and prove Theorem~\ref{thm-I.MR}. 
\section{Uniformly Regular Riemannian Manifolds}\label{sec-U} 
In this section we recall the definition of uniformly regular Riemannian 
manifolds and collect those 
properties of which we will make use. Details can be found in 
\cite{Ama12c}, \cite{Ama12b}, \cite{Ama15a}, and in the comprehensive 
presentation~\cite{AmaVolIII21a}. Thus we shall be rather brief. 

\par  
We use standard notation from differential geometry and function space 
theory. In particular, an upper, resp.~lower, asterisk on a symbol for a 
diffeomorphism denominates the corresponding pull-back, resp.\ push-forward 
(of tensors). 

\par 
By~$c$, resp.~$c(\al)$ etc., we denote constants~%
\hb{\geq1} which can vary from occurrence  to occurrence. 

\par 
Assume $S$~is a nonempty set. On the cone of nonnegative functions on~$S$ 
we define an equivalence relation~%
\hb{{}\sim{}} by 
\hb{f\sim g} iff 
\hb{f(s)/c\leq g(s)\leq cf(s)}, 
\ \hb{s\in S}. 

\par
An \hbox{$m$-dimensional} manifold is a separable metrizable space equipped 
with an \hbox{$m$-dimensional} smooth structure. We always work in the 
smooth category. 

\par 
Let $M$ be an \hbox{$m$-dimensional} manifold with or without boundary. If 
$\ka$~is a local chart, then we use~$U_{\coU\ka}$ for its domain, the 
coordinate patch associated with~$\ka$. The chart is normalized if 
\hb{\ka(U_{\coU\ka})=Q_\ka^m}, where 
\hb{Q_\ka^m=(-1,1)^m} if 
\hb{U\is\ci M}, the interior of~$M$, and 
\hb{Q_\ka^m=[0,1)\times(-1,1)^{m-1}} otherwise. An atlas~$\gK$ is normalized 
if it consists of normalized charts. It is shrinkable if it normalized and 
there exists 
\hb{r\in(0,1)} such that 
\hb{\bigl\{\,\ka^{-1}(rQ_\ka^m)\ ;\ \ka\in\gK\,\bigr\}} is a covering 
of~$M$. It has finite multiplicity if there exists 
\hb{k\in\BN} such that any intersection of more than~$k$ coordinate 
patches is empty. 

\bigskip 
The atlas~$\gK$ is \emph{uniformly regular}~(ur) if 
\begin{equation}\label{U.K} 
\bal 
{\rm(i)}\quad 
&\text{it is shrinkable and has finite multiplicity;}\cr 
{\rm(ii)}\quad 
&\wk\circ\ka^{-1}\in\BUC^\iy\bigl(\ka(U_\kwk),\BR^m\bigr)\text{ and}\cr 
\noalign{\vskip-1\jot} 
&\|\wk\circ\ka^{-1}\|_{k,\iy}\leq c(k),  
 \ \ka,\wk\in\gK, 
 \ k\in\BN,
 \text{ where }U_\kwk:=U_{\coU\ka}\cap U_\wk.   
\eal 
\end{equation}  
Two ur atlases $\gK$ and~$\wt{\gK}$ are equivalent if 
$$ 
\bal 
{\rm(i)}\quad 
&\text{there exists $k\in\BN$ such that each coordinate patch of $\gK$}\cr 
\noalign{\vskip-1.5\jot} 
&\text{meets at most $k$ coordinate patches of $\wt{\gK}$, 
 and vice versa;}\cr 
{\rm(ii)}\quad 
&\text{condition~\eqref{U.K}(ii) holds for all } 
 (\ka,\wt{\ka})\text{ and }(\wt{\ka},\ka)\text{ belonging to } 
 \gK\times\wt{\gK}.   
\eal 
$$ 
This defines an equivalence relation in the class of all ur~atlases. An 
equivalence class thereof is a ur~structure. By~a \emph{ur~manifold} 
we mean a manifold equipped with a ur~structure. Each ur~atlas~$\gK$ 
defines a unique ur~structure, namely the equivalence class to which it 
belongs. Thus, if we need to specify the ur~structure, we write $\MgK$ 
for the ur~manifold and say its ur~structure is induced by~$\gK$. 

\par 
Let $\MgK$ be a ur~manifold. A~Riemannian metric~$g$ on~$M$ is~\emph{ur} if 
$$ 
\bal 
{\rm(i)}\quad 
&\ka_*g\sim g_m,\ \ka\in\gK;\cr 
{\rm(ii)}\quad 
&\|\ka_*g\|_{k,\iy}\leq c(k),\ \ka\in\gK,\ k\in\BN,    
\eal 
$$ 
where 
\hb{g_m:=\prsn=dx^2} is the Euclidean 
metric\footnote{As usual, we use the same symbol for a Riemannian metric 
and its restrictions to sub\-manifolds.} on~$\BR^m$ and (i)~is understood 
in the sense of quadratic forms. This concept is well-defined, 
independently of the specific~$\gK$. A~\emph{uniformly regular Riemannian} 
(urR) \emph{manifold}, 
\hb{\Mg=(M,\gK,g)}, is~a ur~manifold, 
\hb{M=\MgK}, endowed with~a urR metric. 

\par 
In the following examples we use the natural ur~structure 
(e.g.,~the product ur~structure in Example~\ref{exa-U.ex}(c)) 
if nothing is mentioned. 
\begin{examples}\label{exa-U.ex} 
{\rm 
(a) 
Each compact Riemannian manifold is~a urR manifold and its ur~structure is 
unique. 

\par 
(b) 
Let $\Om$ be a bounded domain in~$\BR^m$ with a smooth boundary such that 
$\Om$~lies locally on one side of it. Then $(\oO,g_m)$ is~a 
urR manifold. 

\par 
(c) 
If $\Migi$, 
\ \hb{i=1,2}, are urR manifolds and at most one of them has a nonempty 
boundary, then 
\hb{(M_1\times M_2,\ g_1\times g_2)} is~a urR manifold. 

\par 
(d) 
Assume $\Mg$ is~a urR manifold with a nonempty boundary. 
Denote by~$g_{\pl M}$ the Riemannian metric on~$\pl M$ induced by~$g$. Then 
$(\pl M,g_{\pl M})$ is~a urR manifold.  

\par 
(e) 
Set 
\hb{J_k:=(k-1,\ k+1)} and 
\hb{\lda_k(s):=s-k} for 
\hb{s\in J_k} and 
\hb{k\in\BZ}. Then 
\hb{\{\,\lda_k\ ;\ k\in\BZ\}} is~a ur~atlas for~$\BR$ which induces the 
\emph{canonical} ur~structure.  Its restriction 
\hb{\{\,\lda_k\sn\BR_+\ ;\ k\in\BN\,\}} is~a ur~atlas for~$\BR_+$ inducing 
the \emph{canonical} ur~structure on~$\BR_+$. Unless explicitly said 
otherwise, $\BR$~and~$\BR_+$  are always given the canonical 
ur~structure. Then $(\BR,dx^2)$ and $(\BR_+,dx^2)$ are urR manifolds. Thus 
it follows from Example~\ref{exa-U.ex}(c) that $(\BR^m,g_m)$ and 
$(\BH^m,g_m)$ are urR manifolds. 

\par 
(f) 
Let $M$ be a manifold, $N$~a topological space, and 
\hb{f\colon N\ra M} a~homeomorphism. Let $\gK$ be an atlas for~$M$. Then 
\hb{f^*\gK:=\{\,f^*\ka\ ;\ \ka\in\gK\,\}} is an atlas for~$N$ which induces 
the smooth `pull-back' structure on~$N$. If $\gK$~is ur, 
then $f^*\gK$ also is~ur. 

\par 
Suppose 
\hb{\Mg=(M,\gK,g)} is~a urR manifold. Then 
$$ 
f^*\Mg=f^*(M,\gK,g):=(N,f^*\gK,f^*g) 
\npbd 
$$ 
is~a urR manifold and the map 
\hb{f\colon(N,f^*g)\ra\Mg} is an isometric diffeomorphism.}\qed 
\end{examples} 

\bigskip 
It follows from these examples, for instance, that the cylinders 
\hb{\BR\times M_1} or 
\hb{\BR_+\times M_2}, where $M_i$~are compact Riemannian manifolds with  
\hb{\pl M_2=\es}, are 
urR manifolds. More generally, Riemannian manifolds with cylindrical 
ends are urR manifolds (see~\cite{Ama15a}, where many more examples 
are discussed). 

\par 
Without going into detail, we mention that a Riemannian manifold without 
boundary is~a urR manifold iff it has bounded geometry (see~\cite{Ama12b} 
for one half of this assertion and \cite{DSS16a} for the other half). Thus, 
for example, $(\ci\BH^m,g_m)$~is \emph{not} a urR manifold. 

\par 
A~Riemannian manifold with boundary is~a urR manifold iff it has bounded 
geometry in the sense of Th.~Schick~\cite{Schi01a} (also see 
\cite{AGN19a}, \cite{AGN19b}, \cite{AGN19c}, \cite{GSchn13a} for related 
definitions). Detailed proofs of these equivalences will be found 
in~\cite{AmaVolIII21a}. 
\section{Function Spaces}\label{sec-F} 
Let $\Mg$ be a Riemannian manifold. We consider the tensor bundles 
$$ 
T_0^1M:=TM 
\qb T_1^0M:=T^*M 
\qb T_0^0:=\BR, 
$$ 
and 
$$ 
T_\tau^\sa M:=(TM)^{\otimes\sa}\otimes(T^*M)^{\otimes\tau} 
\qa \sa,\tau\geq1,  
$$ 
endow~$T_\tau^\sa M$  with the tensor bundle metric 
\hb{g_\sa^\tau:=g^{\otimes\sa}\otimes g^{*\,\otimes\tau}}, 
\ \hb{\sa,\tau\in\BN}, and set\footnote{If $V$~is a vector bundle over~$M$, 
then $C^k(V)$~denotes the vector space of $C^k$~sections of~$V$.} 
\begin{equation}\label{F.gst} 
|a|_{g_\sa^\tau}=\sqrt{(a\sn a)_{g_\sa^\tau}} 
:=\sqrt{g_\sa^\tau(a,a)} 
\qa a\in C(T_\tau^\sa M). 
\end{equation}  
By 
\hb{\na=\nag} we denote the Levi--Civita connection and interpret it as 
covariant derivative. Then, given a smooth function~$u$ on~$M$, 
\ \hb{\na^ku\in C^\iy(T_k^0M)} is defined by 
\hb{\na^0u:=u}, 
\ \hb{\na^1u=\na u:=du}, and 
\hb{\na^{k+1}u:=\na(\na^ku)} for 
\hb{k\in\BN}. 

\par 
Let 
\hb{\ka=(x^1,\ldots,x^m)} be a local coordinate system and set 
\hb{\pl_i:=\pl/\pl x^i}. Then 
$$ 
\na^1u=\pl_iu\,dx^i 
\qb \na^2u=\na_{\cona ij}u\,dx^i\otimes dx^j 
 =(\pl_i\pl_ju-\Ga_{ij}^k\pl_ku)dx^i\otimes dx^j,
$$ 
where 
$$ 
\Ga_{ij}^k=\frac12\,g^{k\ell}(\pl_ig_{j\ell}+\pl_jg_{i\ell}-\pl_\ell g_{ij}) 
\qa 1\leq i,j,k\leq m, 
$$ 
are the Christoffel symbols. It follows that 
\begin{equation}\label{F.1} 
|\na u|_{g_0^1}^2=|\na u|_{g^*}^2=g^{ij}\pl_iu\pl_ju 
\end{equation}  
and 
\begin{equation}\label{F.2} 
|\na^2u|_{g_0^2}^2 
=g^{i_1j_1}g^{i_2j_2}\na_{\cona i_1i_2}u\na_{\cona j_1j_2}u. 
\npbd 
\end{equation}  
As usual, 
\hb{d\vol_g=\sqrt{g}\,dx} is the Riemann--Lebesgue volume element 
on~$U_{\coU\ka}$. 

\bigskip  
Let 
\hb{\sa,\tau\in\BN}, put 
\hb{V:=T_\tau^\sa M}, and write 
\hb{\vsdot_V:=\vsdot_{g_\sa^\tau}}. Then $\cD(V)$~is the linear subspace 
of~$C^\iy(V)$ of compactly supported sections. 

\par 
For 
\hb{1\leq q\leq\iy} we set 
$$ 
\|u\|_{L_q\Vg} 
:= 
\left\{ 
\bal 
{}
&\Bigl({\textstyle\int_M}|u|_V^q\,d\vol_g\Bigr)^{1/q},
&&\quad  1   &\leq q<\iy,\cr 
&\ {\textstyle\sup_M}|u|_V,
&&\quad     &q=\iy. 
\eal 
\right. 
$$ 
Then 
$$ 
L_q\Vg 
:=
\bigl(\bigl\{\,u\in L_{1,\loc}(M)\ ;\ \Vsdot_{L_q\Mg}<\iy 
\,\bigr\},\ \Vsdot_{L_q\Mg}\bigr) 
$$ 
is the usual Lebesgue space of \hbox{$L_q$~sections} of~$V$, and 
\hb{L_q\Mg=L_q\Vg} for 
\hb{V=T_0^0M=\BR}. If 
\hb{k\in\BN}, then 
$$ 
\|u\|_{W_{\coW q}^k\Vg} 
:=\sum_{j=0}^k\big\|\,|\na^jv|_{g_\sa^{\tau+j}}\big\|_{L_q\Mg} 
\qa 1\leq q<\iy, 
$$ 
and 
$$ 
\|u\|_{BC^k\Vg} 
:=\sum_{j=0}^k\big\|\,|\na^jv|_{g_\sa^{\tau+j}}\big\|_\iy. 
$$ 
Suppose 
\hb{1\leq q<\iy}. Then the Sobolev space~$W_{\coW q}^k\Vg$ is the 
completion of $\cD(V)$ in~$L_q\Vg$ with respect to the norm~%
\hb{\Vsdot_{W_{\coW q}^k\Vg}}.  

\par 
We denote by~$BC^k\Vg$ the Banach space of all sections 
\hb{u\in C^k(V)} for which 
\hb{\|u\|_{BC^k\Vg}} is finite. Then $bc^k\Vg$~is the closure 
of~$BC^{k+1}\Vg$ in the space~$BC^k\Vg$. 

\par 
In the classical Euclidean case, that is, if $\Mg$~is one of the Riemannian 
manifolds of Examples \ref{exa-U.ex}(b) or \ref{exa-U.ex}(e), it is 
well-known that the above definitions lead to the standard Sobolev spaces, 
resp.\ spaces of bounded and continuous, resp.\ bounded and uniformly 
continuous, \hbox{$F$-valued} functions, where 
\hb{F:=\BR^{m^\sa\times m^\tau}} (cf.~\cite{Ama12c} or~\cite{AmaVolIII21a}). 
\begin{theorem}\label{thm-F.C} 
Suppose $\Mg$~is a urR manifold. Then the Sobolev spaces of sections of~$V$ 
possess the same embedding, interpolation, and trace properties as their 
classical counterparts. 
\end{theorem}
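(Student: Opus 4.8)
The plan is to deduce all three assertions from their classical Euclidean (respectively half-Euclidean) counterparts by a \emph{uniform localization}: I realize each space $W_{\coW q}^k\Vg$ as a retract of an $\ell_q$-direct sum of standard Sobolev spaces over the model cubes~$Q_\ka^m$, and then invoke the fact that embedding, interpolation, and trace properties are stable under the retraction--coretraction method. The entire difficulty is to make every constant uniform in $\ka\in\gK$; this uniformity, rather than any single estimate, is the real content.

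First I would fix a ur~atlas $\gK$ representing the ur~structure and, using shrinkability together with finite multiplicity, build a partition of unity $\{\pi_\ka\}$ subordinate to the covering $\{U_{\coU\ka}\}$ whose members satisfy $\|\ka_*\pi_\ka\|_{k,\iy}\leq c(k)$ uniformly in~$\ka$. With $F:=\BR^{m^\sa\times m^\tau}$ the model fibre, I define the coretraction $R^c\colon W_{\coW q}^k\Vg\ra\bigoplus_\ka W_{\coW q}^k(Q_\ka^m,F)$ into the $\ell_q$-direct sum by
$$
R^cu:=\bigl(\ka_*(\pi_\ka u)\bigr)_{\ka\in\gK},
$$
and the retraction $R\bigl((v_\ka)_\ka\bigr):=\sum_\ka\ka^*(\chi_\ka v_\ka)$, where $\chi_\ka$ is a fixed cut-off equal to~$1$ on the support of $\ka_*\pi_\ka$ and supported in the shrunk cube. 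Finite multiplicity guarantees that these sums are locally finite and that $R,R^c$ are bounded, while $R\circ R^c=\id$ is immediate from $\sum_\ka\pi_\ka=1$. Hence $W_{\coW q}^k\Vg$ is a retract of the $\ell_q$-sum of Euclidean spaces.

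The crux---and the step I expect to be the main obstacle---is the chart-by-chart equivalence of the intrinsic norm with the coordinate one, with constants \emph{independent of}~$\ka$. Concretely, I must show that on each $Q_\ka^m$ the pushed-forward quantity $\sum_{j\leq k}\bigl\|\,|\na^ju|_{g_\sa^{\tau+j}}\bigr\|_{L_q}$ is comparable to the ordinary $W_{\coW q}^k(Q_\ka^m,F)$-norm of $\ka_*u$, uniformly over~$\gK$. Two facts enter. Because $\ka_*g\sim g_m$ uniformly, the fibre metric $g_\sa^\tau$ is uniformly comparable to the Euclidean inner product on~$F$, so the pointwise tensor norm may be replaced by the Euclidean one. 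And the covariant derivatives $\na^ju$ differ from the coordinate derivatives of $\ka_*u$ by terms built from the Christoffel symbols $\Ga_{ij}^k$ and their derivatives; the uniform bounds $\|\ka_*g\|_{k,\iy}\leq c(k)$ together with the uniform lower bound $\ka_*g\geq c^{-1}g_m$ (hence uniform control of the $g^{ij}$) yield uniform bounds on all $\Ga_{ij}^k$ and their derivatives. A triangular induction on~$j$ then converts intrinsic into coordinate derivatives and back with $\ka$-independent constants. This bookkeeping is essentially mechanical, but it is where all the hypotheses of uniform regularity are actually consumed.

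With the retraction in hand the three families of results follow formally. The model spaces $W_{\coW q}^k(Q_\ka^m,F)$ are, uniformly in~$\ka$, the classical Sobolev spaces over a cube or half-cube, for which the embedding, interpolation, and boundary-trace theorems are standard. Since each such property passes to $\ell_q$-direct sums and is preserved under retracts---the retraction--coretraction principle of interpolation theory---it transfers at once to $W_{\coW q}^k\Vg$. For the trace assertion I would additionally use that the boundary charts are normalized to $[0,1)\times(-1,1)^{m-1}$ and that $(\pl M,g_{\pl M})$ is itself a urR~manifold by Example~\ref{exa-U.ex}(d); then $R$ and $R^c$ intertwine the global trace~$\ga$ with the half-space trace $W_{\coW q}^k(\BH^m)\ra W_{\coW q}^{k-1/q}(\BR^{m-1})$ chart by chart, which delivers the sharp loss of regularity. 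Finally, equivalence of ur~atlases (and the attendant equivalence of norms) shows that the spaces and all their properties are independent of the particular~$\gK$.
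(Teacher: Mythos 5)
Your proposal is correct and takes essentially the same route as the paper's own proof, which consists only of citations to \cite{Ama12c}, \cite{Ama12b}, \cite{AmaVolIII21a}: those works establish the theorem exactly by your uniform localization, realizing $W_{\coW q}^k\Vg$ as a retract of an $\ell_q$-direct sum of model spaces $W_{\coW q}^k(Q_\ka^m,F)$ via a uniformly bounded partition of unity subordinate to a ur~atlas (compare the remark after Theorem~\ref{thm-F.loc} that the cited versions ``additionally involve partitions of unity''). The uniform comparability of intrinsic and coordinate norms through $\ka$-independent bounds on the metric, its inverse, and the Christoffel symbols, followed by the retraction--coretraction transfer of embedding, interpolation, and trace theorems from the Euclidean and half-space models, is precisely the mechanism employed there.
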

\begin{proof} 
\cite{Ama12c}, \cite{Ama12b}, \cite{AmaVolIII21a} 
(also cf.~\cite{GSchn13a} for some of these results). 
\end{proof} 
It is possible and important to characterize these spaces locally. 
\begin{theorem}\label{thm-F.loc} 
Let $\Mg$ be a urR manifold, $\gK$~a ur~atlas, 
\hb{1\leq q<\iy}, and 
\hb{k\in\BN}. Then 
$$ 
\bal 
{\rm(i)}\quad 
&u\mt{\textstyle\sum_{\ka\in\gK}}\|\ka_*u\|_{W_{\coW q}^k(Q_\ka^m,F)} 
 \text{ is a norm for }W_{\coW q}^k\Vg.\cr 
{\rm(ii)}\quad 
&u\mt{\textstyle\max_{\ka\in\gK}}\|\ka_*u\|_{BC^k(Q_\ka^m,F)} 
 \text{ is a norm for }BC^k\Vg.\cr 
{\rm(iii)}\quad 
&u\in bc^k\Vg\text{ iff }\ka_*u\in\BUC^k(Q_\ka^m,F) 
 \text{ uniformly with respect to }\ka\in\gK. 
\eal 
$$ 
\end{theorem}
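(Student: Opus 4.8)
The plan is to reduce the intrinsic norms, which are built from the covariant derivatives $\na^ju$ and the volume element, to their coordinate expressions in a ur~atlas, and to exploit the fact that on a urR~manifold all the geometric data entering these expressions are controlled uniformly across the atlas. Concretely, I would first prove a \emph{per-chart} equivalence: for each $\ka\in\gK$,
\begin{equation*}
\|\ka_*u\|_{W_{\coW q}^k(Q_\ka^m,F)}\sim\|u\|_{W_{\coW q}^k(U_{\coU\ka},g)}
\qb
\|\ka_*u\|_{BC^k(Q_\ka^m,F)}\sim\|u\|_{BC^k(U_{\coU\ka},g)}
\end{equation*}
where the right-hand norms are the intrinsic ones restricted to the coordinate patch and---this is the whole point---the equivalence constants may be chosen \emph{independently of}~$\ka$. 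Granting this, the global statements follow from soft combinatorics: finite multiplicity yields the bound of the summed (resp.\ maximized) local norms by the intrinsic norm, and shrinkability yields the reverse bound.

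For the per-chart equivalence I would start from~\eqref{F.1}, \eqref{F.2} and their higher-order analogues, writing $|\na^ju|_g^2$ in the chart~$\ka$ as a quadratic form in the ordinary partial derivatives $\pl^\ba(\ka_*u)$, $|\ba|\le j$, whose coefficients are polynomial expressions in the inverse-metric components $g^{ij}$, the Christoffel symbols~$\Ga_{ij}^k$, and their derivatives. Condition~(ii) in the definition of a urR~metric gives $\|\ka_*g\|_{k,\iy}\le c(k)$, while condition~(i), $\ka_*g\sim g_m$, gives uniform positive definiteness; together these bound $g^{ij}$, the density $\sqrt{g}$, the $\Ga_{ij}^k$, and all their derivatives by constants depending only on~$k$, uniformly in~$\ka$. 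Since $\na^ju=\pl^j(\ka_*u)+(\text{terms of order}\le j-1)$ with uniformly bounded coefficients, this triangular relation can be inverted recursively to express each $\pl^\ba(\ka_*u)$ through $\na^0u,\dots,\na^{|\ba|}u$, again with uniformly bounded coefficients. Combining the two directions with the uniform comparison $d\vol_g=\sqrt{g}\,dx\sim dx$ gives the asserted per-chart equivalences, with constants controlled by $c(k)$ and the comparison constant alone; the tensorial case is structurally identical, only carrying more indices, because inside a fixed chart the covariant derivative acts on the component functions through the same uniformly controlled~$\Ga_{ij}^k$.

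To glue, I read the sum in~(i) as the $\ell_q$-direct-sum expression $\big(\sum_{\ka}\|\ka_*u\|_{W_{\coW q}^k(Q_\ka^m,F)}^q\big)^{1/q}$. For the upper bound, the per-chart equivalence and the finite multiplicity~$N$ of~$\gK$ give
\begin{equation*}
\sum_{\ka\in\gK}\|\ka_*u\|_{W_{\coW q}^k(Q_\ka^m,F)}^q
\sim\sum_{\ka\in\gK}\|u\|_{W_{\coW q}^k(U_{\coU\ka},g)}^q
\le N\,\|u\|_{W_{\coW q}^k\Vg}^q,
\end{equation*}
since each point of~$M$ lies in at most~$N$ patches. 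For the reverse bound I would invoke shrinkability: the sets $\ka^{-1}(rQ_\ka^m)$ cover~$M$, so the intrinsic integral over~$M$ is dominated by the sum of the integrals over these patches, hence by $\sum_\ka\|u\|_{W_{\coW q}^k(U_{\coU\ka},g)}^q$. The same two inequalities, with $\sum(\,\sdot\,)^q$ replaced by $\sup$, prove~(ii). For~(iii) I would use that $bc^k\Vg$ is the closure of $BC^{k+1}\Vg$ in $BC^k\Vg$: the per-chart equivalence with the bound $c(k+1)$ shows that $u\in BC^{k+1}\Vg$ has coordinate representations $\ka_*u$ whose $k$-th derivatives are Lipschitz with a \emph{common} constant, hence equi-$\BUC^k$; the class of $u$ with $\ka_*u$ uniformly in $\BUC^k(Q_\ka^m,F)$ is closed in $BC^k\Vg$ by~(ii); and a chart-wise mollification patched by a partition of unity subordinate to~$\gK$ supplies the converse inclusion, giving the stated equivalence.

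The main obstacle is the per-chart equivalence with constants independent of the chart, and within it the uniform control of the lower-order, Christoffel-type coefficients that proliferate in $\na^ju$ for larger~$j$ together with the inversion of the triangular relation between covariant and ordinary derivatives. This is precisely where uniform regularity is indispensable: the metric bounds $\|\ka_*g\|_{k,\iy}\le c(k)$ together with the uniform ellipticity $\ka_*g\sim g_m$ are exactly what keep these constants from degenerating from chart to chart---for a general Riemannian manifold they would blow up and the local characterization would fail. By contrast, the passage from local to global is purely combinatorial, resting only on \eqref{U.K}(i), that is, on finite multiplicity and shrinkability.
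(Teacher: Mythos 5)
The paper contains no internal proof to compare against: its ``proof'' of Theorem~\ref{thm-F.loc} is the citation \cite{AmaVolIII21a}, with the remark that \cite{Ama12c} and \cite{Ama12b} prove similar characterizations that additionally involve partitions of unity. Your two-step scheme---a per-chart equivalence $\|\ka_*u\|_{W_{\coW q}^k(Q_\ka^m,F)}\sim\|u\|_{W_{\coW q}^k(U_{\coU\ka},g)}$ with $\ka$-independent constants extracted from $\ka_*g\sim g_m$ and $\|\ka_*g\|_{k,\iy}\leq c(k)$, followed by gluing via finite multiplicity and shrinkability---is precisely the standard argument behind those citations, and for (i) and (ii) it is leaner than the published versions, since you dispense with partitions of unity except in the mollification step of (iii), where one is genuinely needed. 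Two points deserve emphasis. First, your reading of the sum in (i) as the $\ell_q$-expression $\bigl(\sum_\ka\|\ka_*u\|^q\bigr)^{1/q}$ is not merely convenient but forced for $q>1$ on noncompact $M$: the literal $\ell_1$-sum is a strictly stronger functional and is infinite on generic elements of $W_{\coW q}^k\Vg$. Second, in (iii) the boundary charts have image $[0,1)\times(-1,1)^{m-1}$, so chart-wise mollification needs an extension across the flat boundary (or one-sided mollifiers), and the subordinate partition of unity must have derivatives bounded uniformly over $\gK$; such partitions exist for ur~atlases, but that existence is itself a lemma (see \cite{Ama12c}, \cite{Ama15a}) you should cite or prove rather than assume.
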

\begin{proof} 
\cite{AmaVolIII21a}. Also see \cite{Ama12c} and \cite{Ama12b} for similar 
assertions which, however, additionally involve partitions of unity.  
\end{proof} 
\section{Parabolic Problems on Uniformly Regular\\ 
Riemannian Manifolds}\label{sec-P} 
Let $\Mg$ be an \hbox{$m$-dimensional} Riemannian manifold. In this section 
we do not mention~$g$ in the notation for function spaces. Thus 
\hb{W_{\coW p}^k(M)=W_{\coW p}^k\Mg}, etc. 

\par 
We consider a second order differential operator~$\cA$, defined for 
\hb{u\in C^2(M)} by 
$$ 
\cA u:=-a_2\cdot\na^2u+a_1\cdot\na u+a_0\cdot u, 
$$ 
where 
$$ 
a_i\in C(T_0^iM)  
\qa i=0,1,2, 
$$ 
and 
\hb{{}\cdot{}}~denotes complete contraction. Then $\cA$~is 
\emph{uniformly strongly elliptic} if there exists 
\hb{\ul{\al}>0} such that 
\begin{equation}\label{P.el} 
a_2(p)\cdot(\xi\otimes\xi)\geq\ul{\al}\,|\xi|_{g^*(p)}^2 
\qa \xi\in T_p^*M 
\qb p\in M. 
\end{equation}  
\begin{remark}\label{rem-P.el} 
{\rm 
The following assumptions are equivalent: 
$$ 
\bal 
{\rm(i)}\quad 
&a_2\text{ is uniformly bounded and satisfies \eqref{P.el}}.\cr 
{\rm(ii)}\quad 
&a_2(p)\cdot(\xi\otimes\xi)\sim|\xi|_{g^*(p)}^2, 
 \ \xi\in T_p^*M,\ p\in M. 
\eal 
$$} 
\end{remark} 
\begin{proof} 
Let 
\hb{\bigl(H,\prsn\bigr)} be a Hilbert space and $A$~a positive semidefinite 
symmetric linear operator on~$H$. Then 
\hb{\|A\|=\sup\bigl\{\,(Ax\sn x)\ ;\ \|x\|=1\,\bigr\}}. From this the 
assertion is obvious. 
\end{proof} 
Suppose 
\hb{\pl M\neq\es}. A~first order boundary operator~$\cB_1$ is defined by 
$$ 
\cB_1:=b_1\cdot\ga\na+b_0\ga, 
$$ 
where 
$$ 
b_0\in C(\pl M) 
\qb b_1\in C(T_{\pl M}M), 
\npbd 
$$ 
with $T_{\pl M}$ being the restriction of~$TM$ to~$\pl M$. 

\bigskip 
We fix 
\hb{\da\in C\bigl(\pl M,\{0,1\}\bigr)} and set 
$$ 
\cB:=\da\cB_1+(1-\da)\ga. 
$$ 
Thus $\cB$~is the Dirichlet boundary operator on 
\hb{\pl_0M:=\da^{-1}\{0\}} and the first order boundary operator~$\cB_1$ on 
\hb{\pl_1M:=\da^{-1}(1)}. Note that 
$\pl_0M$ and~$\pl_1M$ are disjoint, open and closed in~$\pl M$, and 
\hb{\pl_0M\cup\pl_1M=\pl M}. Either $\pl_0M$ or~$\pl_1M$ may be empty. 
Also note that $\da$~is constant on the connected components of~$\pl M$. 
Then $\cB$~is~a \emph{uniformly normal} boundary operator if either 
\hb{\da=0} or 
$$ 
\inf_{q\in\pl_1M}\big|\bigl(b_1(q)\bsn\nu(q)\bigr)_{g(q)}\big|>0. 
$$ 
Finally, $\AB$ is~a \emph{uniformly normally elliptic BVP} on~$\Mg$ if 
$$ 
\bal 
{\rm(i)}\quad 
&\cA\text{ is uniformly strongly elliptic;}\cr 
{\rm(ii)}\quad 
&\cB\text{ is uniformly normal.}
\eal 
$$ 
The BVP $\AB$ is \emph{\hbox{$bc$-regular}} if 
\begin{equation}\label{P.a} 
a_2\in bc(T_0^2M) 
\qb a_1\in BC(TM) 
\qb a_0\in BC(M), 
\end{equation}  
and 
$$ 
b_1\in BC^1(T_{\pl M}M) 
\qb b_0\in BC^1(\pl M). 
$$ 

\bigskip 
Our interest in this section concerns the Sobolev space solvability of 
the BVP~\eqref{I.P} in the present setting. Assuming $\AB$ to be 
\hbox{$bc$-regular}, we set, as in the introduction,  
$$ 
W_{\coW p,\cB}^2(M) 
:=\bigl\{\,u\in W_{\coW p}^2(M)\ ;\ \cB u=0\,\bigr\} 
$$ 
and 
$$ 
A:=\cA\sn W_{\coW p,\cB}^2(M), 
\npbd 
$$ 
considered as a linear operator in~$L_p(M)$. 
\addtocounter{theorem}{1} 
\begin{theorem}\label{thm-P.MR} 
Suppose $\Mg$~is a urR manifold, 
\hb{1<p<\iy}, and $\AB$~is \hbox{$bc$-regular} 
and uniformly normally elliptic. Then 
$$ 
\bal 
{\rm(i)}\quad 
&\bigl(L_p(M),W_{\coW p,\cB}^2(M)\bigr) 
 \text{ is a densely injected Banach couple}.\cr 
{\rm(ii)}\quad 
&A\in\cL\bigl(W_{\coW p,\cB}^2(M),L_p(M)\bigr).\cr 
{\rm(iii)}\quad 
&\bigl(L_p(J,W_{\coW p,\cB}^2(M)) 
 \cap W_{\coW p}^1(J,L_p(M)),\ L_p(J,L_p(M))\bigr)\cr  
\noalign{\vskip-1\jot}  
&\text{ is a pair of maximal regularity for }A. 
\eal 
$$ 
\end{theorem}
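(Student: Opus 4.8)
The plan is to prove all three assertions by localizing, through a ur~atlas~$\gK$, to the two model cases---the full space $(\BR^m,g_m)$ for interior charts and the half-space $(\BH^m,g_m)$ for boundary charts---where the statement collapses to the classical Euclidean theory, and then to reassemble the local data using the uniform chart bounds~\eqref{U.K}(ii) and the finite multiplicity of~$\gK$. Parts~(i) and~(ii) are essentially bookkeeping. By the local characterization Theorem~\ref{thm-F.loc}(i), one has $u\in W_{\coW p}^2(M)$ precisely when the local representatives $\ka_*u$ lie uniformly in the classical Sobolev space over~$Q_\ka^m$; since the $bc$-regularity condition~\eqref{P.a} together with~\eqref{U.K}(ii) renders the coefficients of every~$\cA_\ka$ uniformly bounded, $\cA$~maps $W_{\coW p}^2(M)$ boundedly into $L_p(M)$, which is~(ii), and the condition $\cB u=0$ cuts out a closed subspace. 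For the density asserted in~(i), one approximates in $L_p(M)$ by compactly supported smooth sections and then corrects them near~$\pl M$ so as to satisfy $\cB u=0$; performed chart-by-chart, this reduces to the familiar density statement in the model domains.

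The substance lies in the maximal-regularity claim~(iii). First I would settle the \emph{model case}: a constant-coefficient, uniformly strongly elliptic operator on~$\BR^m$, or such an operator on~$\BH^m$ equipped with a constant-coefficient normal boundary operator, forms a pair of maximal regularity. This is classical and can be cited from Chapter~III of~\cite{Ama95a}, or recovered from the explicit Fourier-multiplier representation of the resolvent together with an operator-valued multiplier theorem. The decisive point is that the resulting bound depends only on the ellipticity constant~$\ul{\al}$ of~\eqref{P.el}, on the normality constant entering the definition of uniform normality, and on the $BC$-norms of the frozen coefficients---and is therefore \emph{uniform} over all admissible freezing points.

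Next comes \emph{localization and perturbation}. I fix a shrinkable ur~atlas~$\gK$ of finite multiplicity and a subordinate partition of unity~$\{\pi_\ka\}$ with uniformly bounded derivatives. In each patch I freeze the top-order coefficients of~$\cA_\ka$ at its center. Since $bc$-regularity forces those coefficients to be uniformly continuous, their oscillation across a single patch is uniformly small, so the true operator is a uniformly small perturbation of its frozen model and inherits maximal regularity with a uniform constant. The lower-order terms and the commutators $[\cA,\pi_\ka]$, being of order at most one, are absorbed by a Neumann-series argument whose constants are once more controlled by~\eqref{U.K}(ii). Summing the resulting local solution operators against the partition of unity---and using finite multiplicity to bound the overlaps---produces a global right inverse for $(\pl+A,\ga_0)$; the analogous construction on the left then yields the isomorphism that defines maximal regularity.

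The hard part is the uniformity accounting: one must verify that \emph{every} constant in the argument---the model-case regularity bound, the oscillation threshold making the perturbation small, and the commutator estimates---is independent of~$\ka$. This is exactly where the urR hypothesis is indispensable, for~\eqref{U.K}(ii) fixes the chart transitions uniformly and thereby pins down the ellipticity type and the coefficient bounds of all the~$\cA_\ka$ at once, while finite multiplicity guarantees that the reassembly costs only a single fixed factor. Once this uniformity is secured, the passage from the local isomorphisms to the global statement~(iii) is routine.
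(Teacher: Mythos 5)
Be aware that the paper gives no argument here at all: Theorem~\ref{thm-P.MR} is disposed of in one line as a special case of the much more general Theorem~1.2.3(i) of \cite{Ama17a}. So your sketch cannot match the printed proof, but it does reconstruct, in outline, exactly the localization--freezing--perturbation scheme on which the cited general theorem rests (reduction via a ur~atlas to the models $(\BR^m,g_m)$ and $(\BH^m,g_m)$, uniform model estimates, reassembly controlled by \eqref{U.K} and finite multiplicity), and that route is sound. Three points need sharpening before it would count as a proof. First, in the half-space model the uniform constant is not a consequence of ellipticity and normality in general: what saves you is that for \emph{scalar} second order strongly elliptic operators uniform normality implies the Lopatinskii--Shapiro condition; the paper itself signals, right after the theorem, that for systems or higher order problems Shapiro--Lopatinskii conditions must be imposed separately, so this implication should be stated rather than folded into a ``normality constant.'' Second, your Neumann-series absorption of the lower order terms and of the commutators $[\cA,\pi_\ka]$ is not a norm-smallness argument as written: these terms are bounded from $W_{\coW p}^2(M)$ to $L_p(M)$ but not small there. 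The standard repair is to prove maximal regularity first for $A+\omega$ with $\omega$ large, making the first order part relatively small via an interpolation inequality, and then to remove the shift, which is harmless because $J$ is a finite interval and maximal regularity there is invariant under such perturbations. Third, the small-oscillation step presupposes that the ur~atlas can be refined so that all patches have arbitrarily small diameter with the bounds in \eqref{U.K} preserved; this is true, and the paper records it at the start of Section~\ref{sec-R}, but it must be invoked explicitly, since a fixed atlas yields no smallness. As a minor simplification, the density in~(i) needs no boundary correction at all: functions in $\cD(\ci M)$ vanish identically near $\pl M$, hence satisfy $\cB u=0$ trivially and lie in $W_{\coW p,\cB}^2(M)$, and they are already dense in $L_p(M)$ because $\pl M$ is a null set.
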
 
\begin{proof}
This is a special case of the much more general Theorem~1.2.3(i) 
of~\cite{Ama17a} (also see~\cite{AmaVolIII21a}). 
\end{proof} 
In order to reduce the technical requirements to a minimum, 
we restrict ourselves to autonomous second order problems with 
homogeneous boundary conditions. 

\par 
There are similar results applying to more general situations: $\AB$~can be 
non-autonomous, involve operators acting on vector bundles, and be of higher 
order, provided Shapiro--Lopatinskii conditions apply. Nonhomogeneous 
boundary conditions can also be admitted. Besides of the Sobolev space 
results, there is also a H\"older space solution theory of the same general 
nature. All this will be exposed in detail in~\cite{AmaVolIII21a}. 
The reader may also consult our earlier papers \cite{Ama16a} 
and~\cite{Ama17a}.  
\section{Uniformly Regular Manifolds with Boundary 
Singularities}\label{sec-B} 
Let $R$ be a strong singularity function, 
\hb{I:=(0,\ve]}, and set 
$$ 
\sa(y):=\int_y^\ve\frac{d\tau}{R(\tau)} 
\qa y\in I. 
\npbd 
$$ 
We denote the general point of~$\BR_+$ by~$s$. 
\begin{lemma}\label{lem-B.R} 
$\sa$~is a diffeomorphism from~$I$ onto~$\BR_+$ and 
\hb{\sa^*(ds^2)=dy^2/R^2}. 
\end{lemma}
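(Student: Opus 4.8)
The plan is to establish both assertions by direct computation, the one genuinely nontrivial ingredient being the divergence condition built into the definition of a strong singularity function. First I would note that $\sa$ is smooth on $I$: since $R\in C^\iy\bigl((0,1],(0,\iy)\bigr)$ is strictly positive, the integrand $1/R$ is smooth, so the fundamental theorem of calculus gives $\sa\in C^\iy(I)$ with $\sa'(y)=-1/R(y)$. Because $R(y)>0$ throughout, $\sa'$ is strictly negative and nowhere vanishing; hence $\sa$ is strictly decreasing and, in particular, injective.

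Next I would pin down the image of~$\sa$, which is where the hypothesis enters. Directly from the definition, $\sa(\ve)=0$. For the behavior as $y\to0^+$, observe that $R$ is continuous and strictly positive on the compact interval~$[\ve,1]$, hence bounded below there, so $\int_\ve^1 d\tau/R(\tau)<\iy$; together with $\int_0^1 d\tau/R(\tau)=\iy$ this forces $\int_0^\ve d\tau/R(\tau)=\iy$, i.e.\ $\sa(y)\to\iy$ as $y\to0^+$. By continuity and strict monotonicity, $\sa$ then maps $I=(0,\ve]$ bijectively onto $[0,\iy)=\BR_+$, carrying the boundary point~$\ve$ to the boundary point~$0$. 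Since $\sa'$ never vanishes, the inverse function theorem shows $\sa^{-1}$ is smooth as well, so $\sa$ is a diffeomorphism of manifolds with boundary.

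Finally, the metric identity is a one-line pull-back: with $s=\sa(y)$ one has $ds=\sa'(y)\,dy=-dy/R(y)$, whence $\sa^*(ds^2)=\bigl(\sa'(y)\bigr)^2\,dy^2=dy^2/R^2$. I expect no serious difficulty in any of this; the single step that actually uses an assumption, rather than formal manipulation, is the surjectivity of~$\sa$ onto all of~$\BR_+$. This is precisely the role of the requirement $\int_0^1 dy/R=\iy$ in the definition of a strong singularity function: were that integral finite, $\sa$ would map $I$ only onto a bounded interval, and the identification of~$I$ with the half-line~$\BR_+$---on which the urR machinery of Section~\ref{sec-U} is available---would fail.
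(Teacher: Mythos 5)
Your proof is correct and takes essentially the same route as the paper's: Amann's three-line proof likewise rests on $\dot\sa(y)=-1/R(y)<0$ for the diffeomorphism claim and on the pull-back computation $\sa^*ds=d\sa=\dot\sa\,dy=-dy/R$ for the metric identity. The only difference is one of completeness: the paper leaves the surjectivity of~$\sa$ onto~$\BR_+$ implicit, whereas you correctly make explicit that it is precisely the divergence condition $\int_0^1 dy/R=\iy$ in the definition of a strong singularity function (together with the finiteness of $\int_\ve^1 d\tau/R(\tau)$) that forces $\sa(y)\to\iy$ as $y\to0^+$.
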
 
\begin{proof} 
The first assertion follows since 
\hb{\dot\sa(y)=-1/R(y)<0} for 
\hb{y\in I}. Hence 
\hb{\sa^*ds=d\sa=\dot\sa dy=-dy/R}. This implies the second claim. 
\end{proof} 
\addtocounter{corollary}{1} 
\begin{corollary}\label{cor-B.R} 
${}$
\hb{(I,\,dy^2/R^2)=\sa^*(\BR_+,ds^2)} is a urR manifold. 
\end{corollary}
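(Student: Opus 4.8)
The plan is to realize $(I,\,dy^2/R^2)$ as the isometric pullback of the canonical urR~manifold on~$\BR_+$ under the change of variables~$\sa$, so that the conclusion drops out of the pullback stability recorded in Example~\ref{exa-U.ex}(f). Essentially all the analytic content has already been isolated in Lemma~\ref{lem-B.R}; what remains is purely formal, and no estimates beyond those in that lemma are required.

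First I would recall from Example~\ref{exa-U.ex}(e) that~$\BR_+$, equipped with its canonical ur~structure (induced by the atlas $\{\,\lda_k\sn\BR_+\ ;\ k\in\BN\,\}$) together with the Euclidean metric~$ds^2$, is a urR~manifold; denote its ur~atlas by~$\gK$. Next, by Lemma~\ref{lem-B.R}, the map $\sa\colon I\ra\BR_+$ is a smooth diffeomorphism and $\sa^*(ds^2)=dy^2/R^2$. In particular~$\sa$ is a homeomorphism, so the hypotheses of Example~\ref{exa-U.ex}(f) are met with the roles $N=I$, $M=\BR_+$, and $f=\sa$. Applying that example verbatim yields that
$$
\sa^*(\BR_+,\gK,ds^2)=(I,\sa^*\gK,\sa^*ds^2)=(I,\sa^*\gK,dy^2/R^2)
$$
is a urR~manifold and that~$\sa$ is an isometric diffeomorphism onto $(\BR_+,ds^2)$, which is exactly the assertion of the corollary.

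The only point requiring a word of care---and the closest thing to an obstacle---is the identification of the smooth structures. The pulled-back atlas~$\sa^*\gK$ a~priori equips~$I$ with the ``pull-back'' smooth structure of Example~\ref{exa-U.ex}(f), and one should check that this coincides with the standard smooth structure (as a manifold with boundary) that $I=(0,\ve]$ inherits from~$\BR$; here the boundary point $y=\ve$ corresponds to $s=0$ and the end $y\to0^+$ corresponds to $s\to\iy$, in accordance with Lemma~\ref{lem-B.R}. This compatibility is immediate, however, since~$\sa$ is already a diffeomorphism for the standard structures, so that~$\sa^*\gK$ is a genuine ur~atlas for~$I$ in its usual sense. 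With that remark the corollary follows.
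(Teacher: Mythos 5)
Your proposal is correct and follows exactly the paper's route: the paper's proof is the one-line citation ``Examples \ref{exa-U.ex}(e) and \ref{exa-U.ex}(f)'', with Lemma~\ref{lem-B.R} supplying the diffeomorphism $\sa$ and the identity $\sa^*(ds^2)=dy^2/R^2$, precisely as you argue. Your additional remark that the pulled-back smooth structure $\sa^*\gK$ agrees with the standard one on $I=(0,\ve]$ (since $\sa$ is already a diffeomorphism for the standard structures, with $y=\ve$ mapping to $s=0$ and $y\to0^+$ to $s\to\iy$ by divergence of $\int_0^1 dy/R$) is a correct and sensible elaboration that the paper leaves implicit.
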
 
\begin{proof} 
Examples \ref{exa-U.ex}(e) and \ref{exa-U.ex}(f). 
\end{proof} 
Now we assume 
\begin{equation}\label{B.M} 
\bal 
\bt\quad 
&\Mg\text{ is a urR manifold}.\cr 
\bt\quad 
&\Ga\text{ is a nonempty open and closed subset of }\pl M.  
\eal 
\end{equation}  
By Example~\ref{exa-U.ex}(d), 
\ $(\Ga,g_\Ga)$~is a urR manifold. Thus, see \cite{Schi01a} 
or~\cite{AmaVolIII21a}, there exist 
\hb{\ve\in(0,1]} and a closed geodesic normal collar 
$$ 
\vp\colon\ol{S}\ra[0,\ve]\times\Ga. 
$$ 
This means that $\ol{S}$~is a closed neighborhood of~$\Ga$ in~$M$ 
and $\vp$~is a diffeomorphism with 
$$ 
\vp^{-1}(y,q)=\exp_q\bigl(y\nu(q)\bigr) 
\qa (y,q)\in[0,\ve]\times\Ga. 
$$ 
Hence 
\begin{equation}\label{B.v} 
v_q:=\bigl(t\mt\vp^{-1}(t,q)\bigr) 
\qa 0\leq t\leq\ve, 
\end{equation}  
is the unique geodesic starting at 
\hb{q\in\Ga} in the direction of the inward normal vector~$\nu(q)$. 
Moreover, 
$$ 
\vp_*g=g_N:=dy^2\oplus g_\Ga 
\npbd 
$$ 
is a product metric on 
\hb{T\bigl([0,\ve]\times\Ga\bigr)=T[0,\ve]\oplus T\Ga}. 
 
\par 
For 
\hb{0<r\leq1} we set 
$$ 
I(r):=(0,r\ve] 
\qb N(r):=I(r)\times\Ga 
\qb N:=N(1), 
$$ 
and 
$$ 
S(r):=\vp^{-1}\bigl(N(r)\bigr) 
\qb S:=S(1)=\ol{S}\ssm\Ga. 
$$ 
We equip~$S$ with a new metric,~$g_R$, as follows: we choose 
\hb{\chi\in C^\iy\bigl(I,[0,1]\bigr)} with 
\hb{\chi(y)=1} for 
\hb{y\leq\ve/3} and 
\hb{\chi(y)=0} for 
\hb{y\geq2\ve/3}. Then we put 
\begin{equation}\label{B.del} 
1/\da^2:=1-\chi+\chi/R^2 
\qb \ga_R:=dy^2/\da^2, 
\end{equation}  
and 
\begin{equation}\label{B.gR} 
g_R:=\vp^*(\ga_R\oplus g_\Ga). 
\end{equation}  
\addtocounter{lemma}{1} 
\begin{lemma}\label{lem-B.g} 
$(S,g_R)$~is a urR manifold and 
\hb{g_R(p)=g(p)} for 
\hb{p\in S\bssm S(2/3)}. 
\end{lemma}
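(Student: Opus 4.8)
The plan is to realise $(S,g_R)$ as an isometric pull-back of a product of two urR manifolds and then to invoke the closure properties collected in Example~\ref{exa-U.ex}. By the very definition $g_R=\vp^*(\ga_R\oplus g_\Ga)$, the collar map restricts to an isometric diffeomorphism of $(S,g_R)$ onto the product Riemannian manifold $(N,\ga_R\oplus g_\Ga)=\bigl(I,\ga_R\bigr)\times(\Ga,g_\Ga)$, where $I=(0,\ve]$ and $N=I\times\Ga$. Since $\Ga$, being open and closed in~$\pl M$, is a urR manifold without boundary (Example~\ref{exa-U.ex}(d)) whereas the one-dimensional factor carries the boundary~$\{\ve\}$, Example~\ref{exa-U.ex}(c) will render the product urR as soon as $\bigl(I,\ga_R\bigr)$ is known to be urR, and Example~\ref{exa-U.ex}(f) will then transport this to $(S,g_R)$. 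Thus everything reduces to proving that the single half-line-type factor $\bigl(I,\ga_R\bigr)$ is a urR manifold.

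First I would check that $\da$ is a genuine smooth positive function on~$I$: writing $1/\da^2=(1-\chi)\cdot1+\chi\cdot R^{-2}$ as a convex combination exhibits it as smooth and bounded below by $\min\{1,R^{-2}\}>0$, so $\da\in C^\iy\bigl(I,(0,\iy)\bigr)$. The crucial quantitative input is that $\chi\equiv1$ on $(0,\ve/3]$, whence $\da=R$ near the singular end; combined with the strong-singularity condition $\int_0^1 dy/R=\iy$ (and the finiteness of the tail $\int_\ve^1 dy/R$, $R$ being continuous and positive on $[\ve,1]$) this forces $\int_0^\ve dy/\da=\iy$.

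Next I would repeat the argument of Lemma~\ref{lem-B.R} and Corollary~\ref{cor-B.R} with $\da$ in place of~$R$. Setting $\eta(y):=\int_y^\ve d\tau/\da(\tau)$ one has $\dot\eta=-1/\da<0$, so $\eta$ is strictly decreasing and smooth with $\eta(\ve)=0$, while the divergence just established gives $\eta(y)\to\iy$ as $y\to0$; hence $\eta\colon I\ra\BR_+$ is a diffeomorphism and $\eta^*(ds^2)=\dot\eta^2\,dy^2=dy^2/\da^2=\ga_R$. As $(\BR_+,ds^2)$ is urR by Example~\ref{exa-U.ex}(e), Example~\ref{exa-U.ex}(f) yields that $\bigl(I,\ga_R\bigr)=\eta^*(\BR_+,ds^2)$ is urR, which by the reduction of the first paragraph proves that $(S,g_R)$ is a urR manifold.

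It remains to verify the coincidence of the metrics off~$\Ga$. For $p\in S\ssm S(2/3)$, writing $\vp(p)=(y,q)$, we have $y>2\ve/3$, so $\chi(y)=0$ and therefore $1/\da^2=1$, i.e.\ $\ga_R=dy^2$ there; consequently $\ga_R\oplus g_\Ga=dy^2\oplus g_\Ga=g_N$ on $\vp\bigl(S\ssm S(2/3)\bigr)$, and pulling back by~$\vp$ gives $g_R=\vp^*g_N=g$ (recall $\vp_*g=g_N$), as asserted. I do not anticipate a genuine obstacle here: the whole argument is a reduction to the half-line model $(\BR_+,ds^2)$ via the closure properties of urR manifolds, and the only substantive point is the divergence of $\int_0^\ve dy/\da$, which is inherited from the defining integral condition on the singularity function precisely because $\da$ was engineered to agree with~$R$ near the degenerate end $y=0$.
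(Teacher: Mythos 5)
Your proof is correct and takes essentially the same route as the paper: realise $(S,g_R)$ as the pull-back $\vp^*\bigl((I,\ga_R)\times(\Ga,g_\Ga)\bigr)$ and invoke the closure properties of Examples~\ref{exa-U.ex}(c), (d), (f) together with the half-line model of Lemma~\ref{lem-B.R} and Corollary~\ref{cor-B.R}. If anything you are more careful than the paper's three-line proof, which cites Corollary~\ref{cor-B.R} (stated for $dy^2/R^2$) directly: you rerun the diffeomorphism argument with $\da$ in place of~$R$, checking $\da\in C^\iy\bigl(I,(0,\iy)\bigr)$ and $\int_0^\ve dy/\da=\iy$, which is precisely the detail the paper leaves implicit.
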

\begin{proof} 
Corollary~\ref{cor-B.R} and Examples \ref{exa-U.ex}(c) 
and \ref{exa-U.ex}(d) imply that 
\hb{(N,\,\ga_R\oplus g_\Ga)} is~a urR manifold. Now the first claim follows 
by applying Example~\ref{exa-U.ex}(f). The second one is obvious. 
\end{proof} 
\addtocounter{theorem}{3} 
\begin{theorem}\label{thm-B.M} 
Let \eqref{B.M} be satisfied. Put 
\hb{\wh{M}:=M\ssm\Ga}. Define 
\begin{equation}\label{B.g} 
\wh{g}_R 
:=  
\left\{ 
\bal 
{}
&g   &&\quad \text{on }\wh{M}\ssm S,\cr 
&g_R &&\quad \text{on }S. 
\eal 
\right. 
\npbd 
\end{equation}  
Then $(\wh{M},\wh{g}_R)$ is a urR manifold. 
\end{theorem}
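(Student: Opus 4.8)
First I would check that (B.g) defines a smooth Riemannian metric on $\wh M$. Since $\Ga$ is closed in $\pl M$ and therefore in $M$, the set $\wh M=M\ssm\Ga$ is open in $M$ and hence a smooth manifold, with boundary $\pl M\ssm\Ga$ and an open (boundaryless) end along the removed $\Ga$. The two branches in (B.g) overlap on $S\bssm S(2/3)$, and there Lemma~\ref{lem-B.g} gives $g_R=g$; so the two prescriptions agree and glue to a smooth metric $\wh g_R$ with $\wh g_R=g$ off $S(2/3)$ and $\wh g_R=g_R$ on $S$.

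The plan is then to build a single ur atlas $\wh\gK$ for $\wh M$ adapted to $\wh g_R$ by amalgamating the two structures already at hand. From the ur atlas of $(S,g_R)$ furnished by Lemma~\ref{lem-B.g} I keep the charts whose patches lie in the inner collar $\vp^{-1}\bigl((0,2\ve/3)\times\Ga\bigr)$; these carry the genuinely modified metric $g_R$, which near $\Ga$ is the product metric of the urR factors $\bigl(I,dy^2/R^2\bigr)\cong(\BR_+,ds^2)$ and $(\Ga,g_\Ga)$ (Corollary~\ref{cor-B.R}, Examples~\ref{exa-U.ex}). From a ur atlas $\gK_M$ of $(M,g)$ I keep the charts with patches in the outer region $\wh M\ssm\vp^{-1}\bigl((0,\ve/2]\times\Ga\bigr)$, on which $\wh g_R$ coincides with $g$ for $y\geq2\ve/3$ (and off the collar) and is uniformly equivalent to $g$ for $\ve/2\leq y\leq2\ve/3$; the latter holds because $g$ and $g_R$ differ there only through the normal factor $1/\da^2=1-\chi+\chi/R^2$, which, together with all its derivatives, is pinched between positive constants since $R$ is smooth and strictly positive on the \emph{compact} interval $[\ve/2,2\ve/3]$. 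Consequently these $\gK_M$-charts are simultaneously ur charts for $\wh g_R$. The two families cover $\wh M$ and overlap only over the transition collar $\vp^{-1}\bigl((\ve/2,2\ve/3)\times\Ga\bigr)$.

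The core of the argument is the verification of (U.K) for $\wh\gK$. Shrinkability, finite multiplicity, and the bounds (U.K)(ii) \emph{within} each family are inherited from the two constituent ur atlases, and $\wh g_R$ is ur on each family (by Lemma~\ref{lem-B.g} on the inner charts, and by the uniform equivalence $\wh g_R\sim g$ together with urR-ness of $(M,g)$ on the outer ones). The one point that requires genuine work is the control of the \emph{cross} transition maps between an inner and an outer chart. These live over the transition collar, a region with compact normal parameter on which both metrics are uniformly equivalent to $g$; there the passage between the two chart families factors through the geodesic normal collar diffeomorphism $\vp$ and the reparametrisation $\sa$, both of which carry uniform $\BUC^\iy$ bounds (the former as part of the urR collar package recalled in Section~\ref{sec-B}, the latter because $\dot\sa=-1/R$ is tame away from $\Ga$). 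This is expected to furnish the missing $\BUC^\iy$ estimates (U.K)(ii) and the finite cross-multiplicity, and is where I anticipate the main difficulty to lie. With (U.K) in hand, $\wh\gK$ is a ur atlas and $\wh g_R$ a urR metric, so $(\wh M,\wh g_R)$ is a urR manifold.
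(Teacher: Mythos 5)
Your plan is correct and is essentially the paper's own argument: the paper's entire proof reads ``This is clear by the preceding lemma,'' i.e., it takes for granted exactly the patching you spell out---amalgamating a ur atlas for $(S,g_R)$ supplied by Lemma~\ref{lem-B.g} with one for $(M,g)$, glued across the transition collar where $g_R=g$ and $\da\sim1$. The cross-chart $\BUC^\iy$ estimates you single out as the main difficulty are precisely the details the paper suppresses as routine, and your sketch of them (compactness of the $y$-interval $[\ve/2,2\ve/3]$, tameness of $\sa$ and of the geodesic collar $\vp$ there) is sound.
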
 
\begin{proof} 
This is clear by the preceding lemma. 
\end{proof} 
For easy reference we say that $(\wh{M},\wh{g}_R)$ is an 
\hbox{$R$\emph{-singular}} \emph{model for}~$\Mg$ (\emph{near}~$\Ga$). 
Moreover, $\Mg$~is \hbox{$R$\emph{-singular}} (\emph{near}~$\Ga$), if it is 
equipped with an \hbox{$R$-singular} model. 
\section{A Renorming Theorem}\label{sec-R} 
In this section we derive a semi-local representation for the Sobolev norms 
on~$(\wh{M},\wh{g}_R)$. 

\par 
First we observe that, in the local coordinate system~$\id_{\ci I}$ 
for~$\ci I$, the Christoffel symbol of~$\na_{\cona\ga_R}$ 
equals 
\hb{-\dot\da/\da}. Hence 
\begin{equation}\label{R.na} 
\na_{\cona\ga_R}^2 
=\Bigl(\frac\pl{\pl y}\Bigr)^2+\frac{\dot\da}\da\,\frac\pl{\pl y} 
=\frac1{\da^2}\Bigl(\da\,\frac\pl{\pl y}\Bigr)^2. 
\end{equation}  
To simplify the writing, we set 
\begin{equation}\label{R.gh} 
h:=g_\Ga 
\qb \wt{g}:=\ga_R\oplus h=dy^2/\da^2\oplus h. 
\end{equation}  
It follows from \eqref{F.1} and 
\hb{\na_{\wt{\ga}}=\na_{\cona\ga_R}\oplus\na_h} that 
\begin{equation}\label{R.N1} 
|\na_{\wt{\ga}}v|_{\wt{g}^*}^2 
=\Big|\da\,\frac{\pl v}{\pl y}\Big|^2+|\na_hv|_{h^*}^2.  
\end{equation}  
Similarly, using \eqref{R.na} and 
\hb{\na_{\wt{g}}^2=\na_{\cona\ga_R}^2\oplus\na_h^2}, 
\begin{equation}\label{R.N2} 
|\na_{\wt{\ga}}^2v|_{\wt{g}_0^2}^2 
=\Big|\Bigl(\da\,\frac\pl{\pl y}\Bigr)^2v\Big|^2+|\na_h^2v|_{h_0^2}^2.  
\end{equation}  
Also note that 
\begin{equation}\label{R.sg} 
\sqrt{\wt{g}}=\sqrt{h}\big/\da. 
\end{equation}  

\par 
Each urR manifold possesses~a ur~atlas whose coordinate patches are smaller 
than any prescribed positive number (cf.~\cite[Section~3]{Ama15a} 
or~\cite{AmaVolIII21a}). Thus we can choose a ur~atlas~$\gK$ for~$M$ 
such that 
\hb{U_{\coU\ka}\is S\ssm S(1/3)} for each 
\hb{\ka\in\gK} for which $U_{\coU\ka}$~meets the boundary of 
\hb{S(2/3)}. Then we set 
$$ 
\gK(W):=\bigl\{\,\ka\in\gK 
\ ;\ U_{\coU\ka}\cap\bigl(M\ssm\ci S(2/3)\bigr)\neq\es\,\bigr\} 
$$ 
and 
$$ 
W:=\bigcup_{\ka\in\gK(W)}U_{\coU\ka}. 
$$ 

\par 
For 
\hb{k\in\BN} and 
\hb{u\in C^k(\wh{M})} we define 
$$ 
\|u\|_{k,p}(W) 
:=\sum_{\ka\in\gK(W)}\|\ka_*u\|_{W_{\coW p}^k(Q_\ka^m)} 
$$ 
and 
$$ 
\bal 
{} 
&\|u\|_{k,p}(S,R)\cr 
&\  
:=\sum_{j=0}^k\Bigl(\int_0^1 
\Bigl(\Big|\Bigl(R(y)\,\frac\pl{\pl y}\Bigr)^j\vp_*u(y,\cdot)\Big|^p 
+|\na_h^j\vp_*u(y,\cdot)|_{h_0^j}^p\Bigr) 
\,d\vol_h\frac{dy}{R(y)}\Bigr)^{1/p}. 
\eal 
$$ 
\begin{theorem}\label{thm-R.N} 
${}$
\hb{u\mt\|u\|_{k,p}(S,R)+\|u\|_{k,p}(W)} is a norm 
for~$W_{\coW p}^k(\wh{M},\wh{g}_R)$. 
\end{theorem}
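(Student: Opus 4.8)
The plan is to reduce the assertion, by means of the local characterization of Sobolev norms on ur~manifolds, to a single equivalence on the collar, and then to identify the intrinsic collar norm with the weighted integral $\|\sdot\|_{k,p}(S,R)$. Since $(\wh M,\wh g_R)$ is a urR~manifold by Theorem~\ref{thm-B.M}, Theorem~\ref{thm-F.loc}(i) gives $\|u\|_{W_{\coW p}^k(\wh M,\wh g_R)}\sim\sum_{\ka\in\gK}\|\ka_*u\|_{W_{\coW p}^k(Q_\ka^m)}$ for any ur~atlas~$\gK$. First I would choose $\gK$ compatible with the construction of~$W$ (it is ur also for $\wh g_R$, since $g_R\sim g$ with bounded distortion off~$S(1/3)$), so that it splits as $\gK=\gK(W)\sqcup\gK_S$ with $U_{\coU\ka}\is\ci S(2/3)\is S$ for $\ka\in\gK_S$; then $\sum_{\ka\in\gK(W)}\|\ka_*u\|=\|u\|_{k,p}(W)$ by definition. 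Because $\wh g_R=g_R$ on the open set~$S$ by~\eqref{B.g}, each $\ka\in\gK_S$ is also a ur~chart of the urR~manifold $(S,g_R)$ (Lemma~\ref{lem-B.g}), and covariant derivatives for $\wh g_R$ and $g_R$ coincide there. Hence the theorem reduces to $\|u\sn S\|_{W_{\coW p}^k(S,g_R)}\sim\|u\|_{k,p}(S,R)$. Granting this, both inequalities follow from Theorem~\ref{thm-F.loc}: the bound of the semi-local norm by the intrinsic one uses that restriction to the open set~$S$ is norm-decreasing together with $\sum_{\gK(W)}\|\ka_*u\|\lesssim\|u\|_{W_{\coW p}^k(\wh M,\wh g_R)}$; for the reverse I would write the intrinsic norm as the full chart sum and estimate $\sum_{\gK_S}\|\ka_*u\|\lesssim\|u\sn S\|_{W_{\coW p}^k(S,g_R)}$ by finite multiplicity, adding $\|u\|_{k,p}(W)$.

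To prove the collar equivalence I would use that, by~\eqref{B.gR}, $\vp$ is an isometry from $(S,g_R)$ onto $(N,\wt g)$ with $\wt g=\ga_R\oplus h$ as in~\eqref{R.gh}, and that the arc-length substitution $s=\int_y^\ve d\tau/\da(\tau)$ (cf.\ Lemma~\ref{lem-B.R}) turns $\ga_R$ into~$ds^2$, with $\da\,\pl_y=\pl_s$, identifying $(S,g_R)$ with the product cylinder $\BR_+\times\Ga$. Since $\ga_R$ depends on~$y$ only and $h$ on the variables of~$\Ga$ only, the Levi--Civita connection of~$\wt g$ is block diagonal---its sole nonvanishing Christoffel symbols being $-\dot\da/\da$ and those of~$h$---so every $\na_{\wt g}^j\vp_*u$ is built from the mixed derivatives $(\da\pl_y)^i\na_h^l\vp_*u$ with $i+l=j$, where $|\na_{\cona\ga_R}^iv|=|(\da\pl_y)^iv|$ on the one-dimensional factor as in~\eqref{R.na}--\eqref{R.N2}, and $d\vol_{g_R}=d\vol_h\,dy/\da$ by~\eqref{R.sg}. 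Thus $\|u\sn S\|_{W_{\coW p}^k(S,g_R)}$ is equivalent to the full product Sobolev norm of~$\vp_*u$, namely the weighted $L_p$~sum of all $(\da\pl_y)^i\na_h^l\vp_*u$ with $i+l\leq k$.

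The hard part is that $\|u\|_{k,p}(S,R)$ retains only the \emph{pure} terms $(R\pl_y)^j$ and $\na_h^j$, whereas the product norm also carries the genuinely mixed terms $(\da\pl_y)^i\na_h^l$ with $i,l\geq1$. One direction is immediate, the pure blocks being dominated pointwise by the full derivative tensor. For the reverse---controlling the mixed terms by the pure ones---I would invoke the standing hypothesis $1<p<\iy$: localizing~$\Ga$ by a ur~atlas of finite multiplicity and flattening each chart reduces the claim, after extension to~$\BR^m$, to the classical $L_p(\BR^m)$ estimate bounding every partial derivative of a given order by the pure ones (Mikhlin multipliers, or boundedness of the Riesz transforms), valid precisely for $1<p<\iy$; summation over the $\Ga$-charts with finite multiplicity globalises it. This is the crux, and it is exactly where $p=1,\iy$ would fail. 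It remains to replace~$\da$ by~$R$: the two coincide for $y\leq\ve/3$, where $\chi=1$ gives $1/\da=1/R$ by~\eqref{B.del}, and being functions of~$y$ alone they are smooth and bounded between positive constants on $[\ve/3,\ve]$, so $dy/\da\sim dy/R$ and the operators $(\da\pl_y)^j$, $(R\pl_y)^j$ yield equivalent contributions; the same bounded-distortion remark, $g_R\sim g$, handles the overlap of~$W$ with the outer collar. Assembling these pieces gives the asserted renorming.
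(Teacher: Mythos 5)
Your argument is correct, and its skeleton coincides with the one the paper's citation-style proof indicates: Theorem~\ref{thm-F.loc} applied to the urR manifold of Theorem~\ref{thm-B.M}, a chart sum split into a $W$-part and a collar part, and the product identities \eqref{R.na}--\eqref{R.sg}. Where you genuinely go beyond the paper is in your treatment of the mixed derivatives, and this extra step is in fact indispensable: the cited identity \eqref{R.N2} is displayed as though the Hessian on $(N,\wt{g})$ had only the two pure blocks, whereas the mixed component $\pl_y\pl_\al v$ of the Hessian of a function on a Riemannian product does not vanish, so that the correct pointwise identity reads
\[
|\na_{\wt{g}}^2v|_{\wt{g}_0^2}^2
=\Big|\Bigl(\da\,\frac\pl{\pl y}\Bigr)^2v\Big|^2
+2\,\Big|\da\,\frac\pl{\pl y}\,\na_hv\Big|_{h^*}^2
+|\na_h^2v|_{h_0^2}^2 .
\]
Consequently the equivalence of the intrinsic norm of $W_{\coW p}^k(\wh{M},\wh{g}_R)$ with the pure-derivative expression $\|\cdot\|_{k,p}(S,R)$ is \emph{not} a pointwise matter, and your crux step---dominating $(\da\pl_y)^i\na_h^l$ with $i,l\geq1$ by the pure derivatives, via one-variable reflections (which, as you should note explicitly, preserve control of the pure derivatives, so there is no circularity in the extension) followed by the Mikhlin/Riesz multiplier estimate on $\BR^m$, with ur~bounds and finite multiplicity yielding uniform constants---is precisely the nontrivial ``detail left to the reader'', and the exact point where the standing hypothesis $1<p<\iy$ enters; it is moreover needed again in the proof of Theorem~\ref{thm-I.MR}, where $\|\cdot\|_{k,p}(S,R)$ must be matched with the mixed-term norm of $W_{\coW p}^k(S;R)$ from the introduction. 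One slip of wording: a ur~atlas $\gK$ of $(M,g)$ is \emph{not} ur for $\wh{g}_R$ on charts meeting~$\Ga$; near~$\Ga$ the ur~charts of $(\wh{M},\wh{g}_R)$ are the cylinder charts obtained from~$\sa$ and a ur~atlas of~$\Ga$ (Lemma~\ref{lem-B.R}, Lemma~\ref{lem-B.g}, Examples~\ref{exa-U.ex}(c),(f)). Since your collar analysis is in any case carried out intrinsically on $(S,g_R)$, and the two structures agree on $S\ssm S(2/3)$ where $\da\sim1$, this does not damage the argument, but the atlas should be assembled as in the proof of Theorem~\ref{thm-B.M} rather than taken from $(M,g)$ wholesale.
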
 
\begin{proof} 
This is a consequence of Theorem~\ref{thm-F.loc}, Lemma~\ref{lem-B.g}, 
Theorem~\ref{thm-B.M},\, \eqref{R.N1}, \eqref{R.N2}, and \eqref{R.sg}.  We 
leave it to the reader to fill in the details. 
\end{proof} 
Since, according to Section~\ref{sec-F}, the norm of 
$W_{\coW p}^k(\wh{M},\wh{g}_R)$ is defined in a coordinate free manner, 
it follows from this theorem that the topology of 
$W_{\coW p}^k(\wh{M},\wh{g}_R)$ is independent of the particular choice 
of the collar neighborhood (that is, of~$\ve$) and the cut-off 
function~$\chi$. 
\section{Elliptic Operators on Singular Manifolds}\label{sec-E} 
Let $(\wh{M},\wh{g}_R)$ be an \hbox{$R$-singular} model 
for~$\Mg$ near~$\Ga$ and set 
$$ 
\wh{g}:=\wh{g}_R 
\qb \wh{\na}:=\wh{\na}_{\wh{g}}. 
$$ 
Assume that 
$$ \wh{\cA}=\cA(\wh{\na}):=-a_2\cdot\wh{\na}^2+a_1\cdot\wh{\na}+a_0 
$$ 
is a linear differential operator on~$(\wh{M},\wh{g})$ with continuous 
coefficients. Due to Theorem~\ref{thm-B.M}, we can apply  
Theorem~\ref{thm-P.MR}, provided $\wh{\cA}$~is uniformly strongly elliptic
and \hbox{$bc$-regular} on~$(\wh{M},\wh{g})$ and $\cB$~is uniformly normal on 
\hb{\pl\wh{M}=\pl M\ssm\Ga}. It follows from the definition of~$\wh{g}$  
that~$\wh{\cA}$, considered as a differential operator on~$(\wh{M},\wh{g})$, 
has singular coefficients. It is the purpose of the following considerations 
to describe the assumptions on~$\wh{\cA}$ in this singular setting. 

\bigskip 
Recalling~\eqref{B.v}, we extend the normal vector field over~$S$ 
by setting  
$$ 
\nu(p):=\dot v_q(y)\in T_pS 
\quad\text{if }\quad 
\vp(p)=(y,q). 
$$ 
Now we define 
\hb{\nu^*(p)\in T_p^*S} by 
$$ 
\bigl\dl\nu^*(p),X\bigr\dr_p:=\bigl(\nu(p)\bsn X\bigr)_{g(p)} 
\qa X\in T_pS, 
$$ 
where 
\hb{\pw_p\colon T_p^*M\times T_pM\ra\BR} is the canonical duality pairing. 
Thus~$\nu(p)$, resp.~$\nu^*(p)$, is at 
\hb{p\in\vp^{-1}(y,q)} obtained from the normal vector~$\nu(q)$, 
resp.\ conormal vector~$\nu^*(q)$, by parallel transport along the 
geodesic curve~$v_q(t)$, 
\ \hb{0\leq t\leq y}. Hence 
$$ 
|\nu(p)|_{g(p)}=|\nu^*(p)|_{g^*(p)}=1 
\qa p\in S. 
\npbd 
$$ 
In abuse of language we call~$\nu^*$ conormal vector (field) \emph{on}~$S$. 

\bigskip 
We denote by 
\hb{\rho(p):=\dist_g(p,\Ga)} the distance in~$(S,g)$ from~$p$ to~$\Ga$. Thus 
\hb{\rho(p)=y} if 
\hb{\vp(p)=(y,q)}. Then 
$$ 
r(p):=R\bigl(\rho(p)\bigr) 
\qa p\in S. 
$$ 
For shorter writing we also set 
$$ 
w[\xi]^2:=w\cdot(\xi\otimes\xi) 
\qa w\in C(T_0^2M) 
\qb \xi\in C(T^*M). 
$$ 
\begin{theorem}\label{thm-E.a} 
$\wh{\cA}$~is uniformly strongly elliptic on~$(\wh{M},\wh{g})$ 
iff 
$$ 
a_2(p)[\xi]^2\sim|\xi|_{g^*(p)}^2 
\qa p\in\wh{M}\ssm S 
\qb \xi\in T_p^*M, 
$$ 
and 
\begin{equation}\label{E.ax} 
a_2(p)[\xi]^2\sim\bigl(r^2(p)\eta^2+|\za|_{g^*(p)}^2\bigr) 
\qa p\in S, 
\npbd 
\end{equation}  
for 
\hb{\xi=\eta\nu^*(p)+\za\in T_p^*M} with 
\hb{\za\perp\nu^*(p)}. 
\end{theorem}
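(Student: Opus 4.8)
The plan is to read uniform strong ellipticity on $(\wh{M},\wh{g})$ off the pointwise comparison of Remark~\ref{rem-P.el} and then to compute the cometric norm $|\xi|_{\wh{g}^*}^2$ explicitly on the two pieces $\wh{M}\ssm S$ and $S$. First I would apply Remark~\ref{rem-P.el} with $\wh{g}$ in the role of $g$: together with the uniform boundedness of $a_2$, uniform strong ellipticity of $\wh{\cA}$ on $(\wh{M},\wh{g})$ is equivalent to
$$
a_2(p)[\xi]^2\sim|\xi|_{\wh{g}^*(p)}^2
\qa p\in\wh{M}\qb\xi\in T_p^*M.
$$
On $\wh{M}\ssm S$ we have $\wh{g}=g$ by \eqref{B.g}, so $|\xi|_{\wh{g}^*}^2=|\xi|_{g^*}^2$ and this is exactly the first asserted equivalence. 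Everything then reduces to evaluating $|\xi|_{\wh{g}^*(p)}^2=|\xi|_{g_R^*(p)}^2$ on the collar $S$, in the decomposition $\xi=\eta\nu^*+\za$ with $\za\perp\nu^*$.

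Next I would work in the collar coordinates $\vp(p)=(y,q)$. By \eqref{B.gR} and \eqref{R.gh} we have $\vp_*g_R=\wt{g}=dy^2/\da^2\oplus h$ with $h=g_\Ga$, while $\vp_*g=dy^2\oplus g_\Ga$ is the product metric of the geodesic collar. The decisive structural facts are that both cometrics are block diagonal with respect to the splitting into the $dy$-direction and the $\Ga$-directions, and that the tangential blocks coincide (both equal to $h$, independently of $y$). Since $\nu(p)=\dot{v}_q(y)$ is, in these coordinates, the unit vector $\pl/\pl y$ by \eqref{B.v} and $\vp_*g=dy^2\oplus g_\Ga$, its conormal $\nu^*$ is exactly $dy$; hence $|dy|_{g_R^*}^2=\da^2$, as already recorded in \eqref{R.N1}, while $|dy|_{g^*}^2=1$. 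Writing $\xi=\eta\nu^*+\za$ with $\za\perp\nu^*$, orthogonality to $\nu^*=dy$ in $g^*$ means precisely that $\za$ carries no $dy$-component, i.e.\ $\za$ is purely tangential; block diagonality then makes the cross terms vanish and, the tangential block being the same for $g_R$ and $g$, yields
$$
|\xi|_{g_R^*(p)}^2=\da^2(y)\,\eta^2+|\za|_{g^*(p)}^2.
$$

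It remains to replace $\da^2$ by $r^2=R^2(\rho)$ up to uniform constants on all of $S$. For $y\leq\ve/3$ the cut-off satisfies $\chi=1$, so \eqref{B.del} gives $\da=R=r$ and the identity above is literally $r^2\eta^2+|\za|_{g^*}^2$. On the complementary, relatively compact range $y\in[\ve/3,\ve]$ the smooth positive function $R$ is bounded above and below, and by \eqref{B.del}, where $\chi\in[0,1]$, the same holds for $\da$; hence $\da^2\sim1\sim r^2$ there with constants independent of $q\in\Ga$. Combining the two ranges gives $\da^2\sim r^2$ uniformly on $S$, and therefore $|\xi|_{g_R^*}^2\sim r^2(p)\eta^2+|\za|_{g^*(p)}^2$, which is precisely \eqref{E.ax}. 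The step that needs genuine care is the conormal computation above: reconciling the intrinsic, parallel-transport definition of $\nu^*$ with the coordinate one-form $dy$, and verifying that the geodesic normal collar really produces a $y$-independent product metric, so that the normal/tangential splitting is simultaneously orthogonal for $g$ and for $g_R$. Once this block structure is in hand the rest is the elementary comparison $\da^2\sim r^2$, where the only point to watch is the uniformity of the equivalence constants over the compact middle range $[\ve/3,\ve]$ and over $q\in\Ga$.
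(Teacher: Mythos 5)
Your proposal is correct and follows essentially the same route as the paper: reduce via Remark~\ref{rem-P.el} to the uniform comparison $a_2(p)[\xi]^2\sim|\xi|_{\wh{g}^*(p)}^2$, use \eqref{B.g} off the collar, push forward to $N$ where $\wt{g}^*=\da^2\,dy^2\oplus h^*$ is block diagonal with $\nu^*$ corresponding to $dy$ (the paper phrases this as $\nu^*(p)^\perp=T_p^*\Ga_y$, noting the complement is the same for $g^*$ and $\wh{g}^*$), and finally replace $\da$ by $r$ using $\da=R$ for $y\leq\ve/3$ and $\da\sim R\sim1$ on $[\ve/3,\ve]$. The points you flag for care (identifying $\nu^*$ with $dy$, the product structure $\vp_*g=dy^2\oplus g_\Ga$) are exactly what the paper's geodesic-collar setup of Section~\ref{sec-B} provides, so there is no gap.
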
 
\begin{proof} 
Set 
\hb{\Ga_y:=\vp^{-1}(y\oplus\Ga)} for 
\hb{0<y\leq\ve}. Then 
\begin{equation}\label{E.T} 
T_p^*\Ga_{r(p)}=\nu^*(p)^\perp.
\end{equation}  
It follows from \eqref{B.gR} and \eqref{B.g} that it does not matter 
if we take the orthogonal complement with respect to~$g^*(p)$ or 
to~$\wh{g}^*(p)$. Thus, given 
$$ 
\xi=\eta\nu^*(p)+\za\in T_p^*S 
\quad\text{with}\quad  
\za\in\nu^*(p)^\perp, 
$$ 
we find 
$$ 
\vp_*\xi=\eta\oplus\wt{\za}\in T_{(y,q)}^*N=\BR\oplus T_q^*\Ga, 
$$ 
where 
\hb{\vp(p)=(y,q)}. We deduce from \eqref{B.del} and \eqref{R.gh} that 
\begin{equation}\label{E.gd} 
\wt{g}^*=\da^2dy^2\oplus h^*. 
\end{equation}  
Hence 
$$ 
|\vp_*\xi|_{\wt{g}^*(y,q)}^2=\da^2(y)\eta^2+|\wt{\za}|_{h^*(q)}^2. 
$$ 
Note that 
\hb{\da(y)\sim R(y)} for 
\hb{1/3\leq y\leq1}. Thus, since 
\hb{\da(y)=R(y)} if 
\hb{0<y\leq1/3}, we get 
\begin{equation}\label{E.Re} 
|\vp_*\xi|_{\wt{g}^*(y,q)}^2 
\sim\bigl(R^2(y)\eta^2+|\wt{\za}|_{h^*(q)}^2\bigr), 
\end{equation}  
uniformly with respect to~$\xi$. Observe that 
$$ 
|\xi|_{\wh{g}^*(p)}^2=\vp^*\bigl(\vp_*(|\xi|_{\wh{g}^*(p)}^2)\bigr) 
=\vp^*(|\vp_*\xi|_{\wt{g}^*(y,q)}^2). 
$$ 
From this and \eqref{E.Re} we obtain 
$$ 
|\xi|_{\wh{g}^*(p)}^2\sim\bigl(\rho^2(p)\eta^2+|\za|_{g^*(p)}^2\bigr) 
\qa \xi\in T^*S.  
\npbd 
$$ 
Now the assertion is an obvious consequence of \eqref{B.g} and 
Remark~\ref{rem-P.el}. 
\end{proof} 
We introduce tensor fields 
\hb{\wt{a}_i\in C(T_0^iN)}, 
\ \hb{i=0,1,2}, by setting 
$$ 
\bal 
\wt{a}_2(y,q)\cdot(\xi_1\otimes\xi_2) 
&:=(\vp_*a_2)(y,q)\cdot\Bigl(\Bigl(\frac{\eta_1}{R(y)}\oplus\za_1\Bigr)
 \otimes\Bigl(\frac{\eta_2}{R(y)}\oplus\za_2\Bigr)\Bigr)\cr 
&\phantom{:=\quad}
 \text{for }\xi_i=\eta_i\oplus\za_i 
 \in\BR\oplus T_q^*\Ga,\ i=1,2,\ (y,q)\in N,\cr 
\wt{a}_1(y,q)\cdot\xi 
&:=(\vp_*a_1)(y,q)\cdot\Bigl(\frac\eta{R(y)}\oplus\za\Bigr)\cr  
&\phantom{:=\quad} 
 \text{for }\xi=\eta\oplus\za\in\BR\oplus T_q^*\Ga,\ (y,q)\in N, 
\eal 
\npbd 
$$ 
and 
\hb{\wt{a}_0:=\vp_*a_0}. 
\begin{theorem}\label{thm-E.bc} 
We set 
\hb{S^c:=\wh{M}\ssm S}. Then $\wh{\cA}$~is \hbox{$bc$-regular} 
on~$(\wh{M},\wh{g})$ iff 
$$ 
\bal 
{\rm(i)}\quad 
&&a_2       &\in bc(T_0^2S^c,g),    
    &&\quad a_i\in BC(T_0^iS^c,g),\ i=0,1;\cr 
{\rm(ii)}\quad 
&&\wt{a}_2  &\in bc(T_0^2N,g_N),    
    &&\quad \wt{a}_i\in BC(T_0^iN,g_N),\ i=0,1. 
\eal 
$$ 
\end{theorem}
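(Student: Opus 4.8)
The plan is to reduce the claim to the definition of $bc$-regularity given in Section~\ref{sec-P}, namely condition~\eqref{P.a}, read off for the metric~$\wh{g}$ on~$(\wh{M},\wh{g})$. The key observation is that $\wh{g}$ coincides with~$g$ away from~$S$ (precisely, on $\wh{M}\ssm S$, by~\eqref{B.g}), so on~$S^c$ the notion of $bc$-regularity for $\wh{\cA}$ is literally the unweighted condition with respect to the ordinary metric~$g$. This immediately gives statement~(i): the coefficients $a_2,a_1,a_0$ must lie in the stated $bc$- and $BC$-classes relative to $(S^c,g)$, with no further transformation needed. The entire content of the theorem therefore lies in translating the $bc$-regularity condition on the singular collar~$S$ into a condition on~$N=I\times\Ga$ via the diffeomorphism~$\vp$.

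Over~$S$ I would push everything forward by~$\vp$ and use the product structure established in Section~\ref{sec-B}, together with the renorming identities~\eqref{R.N1}, \eqref{R.N2}, and~\eqref{E.gd}. The central point is the conormal scaling: on~$N$ the dual metric is $\wt{g}^*=\da^2\,dy^2\oplus h^*$ with $\da=R$ on $0<y\leq1/3$, so measuring a tensor in~$\wh{g}^*$ amounts to measuring its pushforward with the normal cotangent direction weighted by~$R(y)$. This is exactly the weighting built into the definition of the tilde-tensors $\wt{a}_i$ preceding the theorem, where each normal covariant slot~$\eta$ is divided by~$R(y)$. Hence a coefficient tensor $a_i$ is $bc$- or $BC$-bounded in the singular metric~$\wh{g}$ if and only if its rescaled pushforward $\wt{a}_i$ is $bc$- or $BC$-bounded in the \emph{regular} product metric~$g_N$. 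Concretely, I would verify that $|\vp_*a_2|_{(\wh{g}^*)^{\otimes2}}=|\wt{a}_2|_{(g_N^*)^{\otimes2}}$ and analogously for $a_1$, since the factors of $R(y)^{-1}$ absorbed into~$\wt{a}_i$ exactly cancel the factors of $\da=R(y)$ that convert $g_N^*$-norms into $\wh{g}^*$-norms on~$N$.

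To handle the membership in $bc$ (as opposed to merely $BC$) for $a_2$, I would invoke Theorem~\ref{thm-F.loc}(iii): belonging to $bc$ is a local-chart statement about uniform $\BUC$-regularity of the pushforward coefficients, and by the construction in Section~\ref{sec-B} the pushed-forward metric $\wt{g}$ is itself a genuine urR metric on~$N$. The covariant derivatives taken with respect to $\wh{\na}=\wh{\na}_{\wh{g}}$ on~$S$ correspond, under~$\vp$, to covariant derivatives with respect to $\na_{\wt{g}}=\na_{\ga_R}\oplus\na_h$ on~$N$, by naturality of the Levi--Civita connection under isometric diffeomorphisms; so all derivative bounds defining $bc(T_0^2N,g_N)$ and $BC(T_0^iN,g_N)$ match the corresponding bounds on~$S$ after the rescaling. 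Assembling the equivalence on~$S^c$ with the equivalence on~$S$ then yields both directions of the stated iff.

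The main obstacle, and the step deserving the most care, is the precise bookkeeping of the conormal rescaling in the higher-rank tensor $a_2$: one must check that dividing \emph{each} of the two cotangent slots by $R(y)$ in the definition of~$\wt{a}_2$ is exactly compensated by the two factors of~$\da=R(y)$ arising from~$\wt{g}^*$, and that this compatibility persists after applying~$\wh{\na}$, since the connection~$\na_{\ga_R}$ carries the nontrivial Christoffel symbol $-\dot\da/\da$ recorded in~\eqref{R.na}. In other words, the subtlety is that the $R$-weight is not a constant conformal factor but a $y$-dependent one, so the derivative bounds for $bc$-regularity mix the weight with its derivatives; verifying that these mixed terms remain uniformly bounded is where the uniform regularity of~$(N,\wt{g})$ from Lemma~\ref{lem-B.g} does the essential work. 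The remaining computations are the routine index-chasing that I would leave to the reader, exactly as in the proof of Theorem~\ref{thm-R.N}.
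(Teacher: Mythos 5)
Your proposal is correct and follows essentially the same route as the paper: step~(1) of the paper likewise disposes of~(i) by noting that \hb{(T_0^kS^c,\wh{g})=(T_0^kS^c,g)} via~\eqref{B.g}, and step~(2) pushes $\wh{\cA}$ forward by~$\vp$, rewrites it in terms of \hb{R\,\pl_y\oplus\na_h} using \eqref{R.na} and \hb{R^2/\da^2=\chi+R^2(1-\chi)}, and concludes from the same rescaling identity you isolate, \hb{\|\wt{a}_i\|_{BC(T_0^iN,g_N)}=\|\vp_*a_i\|_{BC(T_0^iN,\wt{g})}}. Your slight imprecision about the factors of $R$ ``exactly'' cancelling the factors of~$\da$ (true only where \hb{\chi=1}; elsewhere \hb{\da\sim R} gives uniform equivalence, which suffices) is shared by the paper itself, so there is no substantive gap.
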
 
\begin{proof} 
(1) Since, by~\eqref{B.g}, 
\ \hb{(T_0^kS^c,\wh{g})=(T_0^kS^c,g)} for 
\hb{k\in\BN}, we can restrict our considerations to~$S$. 

\par 
(2) 
We denote by~$\vp_*\wh{\cA}$ the push-forward of~$\wh{\cA}$ by~$\vp$. Thus 
$\vp_*\wh{\cA}$~is a linear operator on~$N$, defined by 
$$ 
(\vp_*\wh{\cA})v:=\vp_*\bigl(\wh{\cA}(\vp^*v)\bigr) 
\qa v\in C^2(N). 
$$ 
It follows that (see \eqref{B.gR}, \eqref{B.g}, and \eqref{R.gh}) 
$$ 
\vp_*\wh{\na}=\na_{\cona\vp_*\wh{g}} 
=\na_{\wt{g}}=\na_{\cona g_R}\oplus\na_h.
$$ 
Hence 
$$ 
\vp_*\wh{\cA} 
=-(\vp_*a_2)\cdot\na_{\wt{g}}^2+(\vp_*a_1)\cdot\na_{\wt{g}}+\vp_*a_0. 
$$ 
Using~\eqref{R.na}, we find 
$$ 
\bal 
\vp_*\wh{\cA} 
&=-(\vp_*a_2)\cdot  
\Bigl(\frac1{\da^2}\Bigl(\da\,\frac\pl{\pl y}\Bigr)^2\oplus\na_h^2\Bigr)\cr 
&\phantom{={}} 
+(\vp_*a_1)\cdot 
\Bigl(\frac1\da\Bigl(\da\,\frac\pl{\pl y}\Bigr)\oplus\na_h\Bigr) 
+\vp_*a_0. 
\eal 
$$ 
Note that, by~\eqref{B.del}, 
$$ 
R^2/\da^2=\chi+R^2(1-\chi) 
$$ 
and 
\hb{1/c\leq\pl^jR(y)\leq c} for 
\hb{\ve/3\leq y\leq\ve} and 
\hb{j=0,1}. Thus we can rewrite~$\vp_*\wh{\cA}$ as 
\begin{equation}\label{E.fA} 
\vp_*\wh{\cA} 
=-\wh{a}_2\cdot 
\Bigl(R\,\frac\pl{\pl y}\oplus\na_h\Bigr)^2  
+\wh{a}_1\cdot 
\Bigl(R\,\frac\pl{\pl y}\oplus\na_h\Bigr) 
+\wh{a}_0, 
\end{equation}  
where 
$$ 
\wh{a}_2\in bc(T_0^2N,\wt{g}) 
\qa \wh{a}_i\in BC(T_0^iN,\wt{g}) 
\qb i=0,1, 
$$ 
iff 
$$ 
\wt{a}_2\in bc(T_0^2N,g_N) 
\qa \wt{a}_i\in BC(T_0^iN,g_N) 
\qb i=0,1. 
$$ 
It is a consequence of the definition of~$\wt{a}_i$ that 
$$ 
\|\wt{a}_i\|_{BC(T_0^iN,g_N)} 
=\|\vp_*a_i\|_{BC(T_0^iN,\wt{g})} 
\qa i=0,1,2. 
$$ 
Consequently, we derive from \eqref{E.fA} that $\wh{\cA}$~is 
\hbox{$bc$-regular} 
on~$(S,\wh{g})$ iff assumption~(ii) is satisfied. From this and step~(1) 
we get the assertion. 
\end{proof} 
Finally, we prove Theorem~\ref{thm-I.MR} by specializing our general results 
to the specific setting of the introduction. 
\begin{ExtraProof} 
{\rm 
Example~\ref{exa-U.ex}(b) guarantees that 
\hb{\Mg:=(\oO,g_m)} is a urR manifold. It follows from Theorem~\ref{thm-R.N} 
that 
$$ 
W_{\coW p}^k(\oO\ssm\Ga;R)=W_{\coW p}^k(\wh{M},\wh{g}). 
$$ 
Theorem~\ref{thm-E.a} shows that the \hbox{$R$-degenerate} uniform strong 
ellipticity~\eqref{I.dA} implies that $\cA$~is uniformly strongly elliptic 
on~$(\wh{M},\wh{g})$. By taking the compactness of~$\Ga$ into account, we 
deduce from \eqref{I.Ak}, \eqref{I.aa}, and Theorem~\ref{thm-E.bc} that 
$\cA$~is \hbox{$bc$-regular} on~$(\wh{M},\wh{g})$. Due to~\eqref{I.bn} 
and the compactness of~$\Ga_1$, we see that $\cB$~is uniformly normal 
on~$\pl\wh{M}$. Now the assertion is implied by Theorem~\ref{thm-P.MR}.}\qed 
\end{ExtraProof} 
Remark~\ref{rem-I.bc} is an easy consequence of the proof of 
Theorem~\ref{thm-E.bc}, using once more the compactness of~$\Ga$. 

{\small

\noindent 
Herbert Amann, 
Math.\ Institut, Universit\"at Z\"urich, Winterthurerstr.~190,\\   
CH 8057 Z\"urich, Switzerland, herbert.amann@math.uzh.ch}

\end{document}